\newcommand{\wt}[1]{\
  \ifhmode\todo{#1}\else\vadjust{\todo{#1}}\fi}
\newcommand{\ls}[1]{\
  \ifhmode\todo[color=blue!40]{#1}\else\vadjust{\todo[color=blue!40]{#1}}\fi}
\newcommand{\ca}{\mathcal{A}}
\newcommand{\cx}{\mathcal{X}}
\newcommand{\cd}{\mathcal{D}}
\newcommand{\ce}{\mathcal{E}}
\newcommand{\cs}{\mathcal{S}}
\newcommand{\ct}{\mathcal{T}}
\newcommand{\Pos}{\mathcal{P}\hspace*{-0.15mm}os}
\newcommand{\Posl}{\mathcal{POS}}
\newcommand{\Top}{\mathcal{T}\hspace*{-0.45mm}op}
\newcommand{\Set}{\mathcal{S}et}
\newcommand{\Ord}{\mathcal{O}rd}
\newcommand\Frm{\mathcal{F}\hspace*{-0.1mm}rm}
\newcommand\SLat{\mathcal{SL}\hspace*{-0.1mm}at}
\newcommand{\OVec}{\mathcal{OV}\hspace*{-0.1mm}ec}
\newcommand{\POVec}{\mathcal{POV}\hspace*{-0.1mm}ec}
\newcommand{\id}{\mathrm{id}}
\newcommand{\op}{\mathrm{op}}
\newcommand{\ob}{\mathrm{ob}}
\newcommand{\ot}{\cdot}
\theoremstyle{plain}
\newtheorem{theorem}{Theorem}[section]
\newtheorem{proposition}[theorem]{Proposition}
\newtheorem{corollary}[theorem]{Corollary}
\newtheorem{lemma}[theorem]{Lemma}
\theoremstyle{definition}
\newtheorem{definition}[theorem]{Definition}
\newtheorem{example}[theorem]{Example}
\newtheorem{examples}[theorem]{Examples}
\newtheorem{remark}[theorem]{Remark}
\newtheorem{remarks}[theorem]{Remarks}
\newtheorem{oproblem}[theorem]{Open Problem}
\title{Order-enriched solid functors}
\author{Lurdes Sousa}
\address{CMUC, University of Coimbra, 3001-501
  Coimbra, Portugal and\newline\indent Polytechnic Institute of Viseu,
  3504-510 Viseu, Portugal}
\email{sousa@estv.ipv.pt}
\author{Walter Tholen}
\address{York
  University, Toronto ON, M3J 1P3, Canada}
\email{tholen@mathstat.yorku.ca}
\thanks{This work was partially supported by the Centre for Mathematics of the 
University of Coimbra -- UID/MAT/00324/2019, funded by the Portuguese 
Government through FCT/MEC and co-funded by the European Regional 
Development Fund through the Partnership Agreement PT2020.
  The second author acknowledges partial financial support under the Discovery Grants
  Program by the Natural Sciences and Engineering Research Council of Canada.}
\keywords{Ordered category, (strongly) order-solid functor, ordered algebra, weighted (co)limit.}
\subjclass[2010]{}
\date{\today}
\begin{document}

\maketitle

\centerline{\em In memory of V\v{e}ra Trnkov\'{a}}

\begin{abstract} Order-enriched solid functors, as presented in this paper in two versions, enjoy many of the strong properties of their ordinary counterparts, including the transfer of the existence of weighted (co)limits from their codomains to their domains. The ordinary version of the notion first appeared in Trnkov\'a's work on automata theory of the 1970s and was subsequently studied by others under various names, before being put into a general enriched context by Anghel. Our focus in this paper is on differentiating the order-enriched notion from the ordinary one, mostly in terms of the functor's behaviour with respect to specific weighted (co)limits, and on the presentation of examples, which include functors of general varieties of ordered algebras and special ones, such as ordered vector spaces.
\end{abstract}

\section{Introduction}\label{sec-int}
Inspired by \v{C}ech's book \cite{Cech} and Hu\v{s}ek's article \cite{Husek}, in her work \cite{Tr75} on \emph{Automata and Categories} V\v{e}ra Trnkov\'a defined a concrete category $\ca$ (which therefore comes with a faithful functor $|\text{-}|:\ca\to\Set$) to \emph{admit weak inductive generation} if, for every possibly large (!) family $(D_i)_{i\in I}$ of $\ca$-objects, equipped with maps $\xi_i: |D_i| \to X\,(i\in I)$ into a given set $X$, there exist an $\ca$-object $A$ and a map $q:X\to |A|$ such that 
\begin{itemize}
\item[1.] all maps $q\cdot \xi_i: |D_i|\to |A|$ underly $\ca$-morphisms $D_i\to A$, and 

\item[2.] the pair $(q,A)$ is universal with this property, \emph{i.e.}, for every map $f:X\to |B|$ with $B$ in $\ca$, such that all maps $f\cdot \xi_i:|D_i|\to |B|$ underly $\ca$-morphisms $D_i\to B$, there is a unique $\ca$-morphism $t:A\to B$ with $ |t|\cdot q = f$. 
\end{itemize}
When $q$ may always be chosen to be the identity map (so that $A$ has underlying set $X$), this gives precisely the notion of concrete \emph{topological category} or, when one trades $\Set$ for any ``base category" $\cx$, of (faithful) \emph{topological functor} $P:\ca\to\cx$ (see, for example, \cite{AHS, HST2014}). This, as it turns out, self-dual notion was first introduced (in dual form and under a different name) in Br\"{u}mmer's thesis \cite{Brummer}; Trnkov\'{a}, not aware of \cite{Brummer}, calls $\ca$ to \emph{admit inductive generation} in this case.

Other precursors to the notion of topological category or functor (first just over $\Set$, but then over any category $\cx$), such as \cite{Taylor, Antoine, Roberts, Wyler71a, Wyler71b, Manes, Shukla, H72, Wischnewsky, Hong}, limited the concept of inductive generation and its dualization to the consideration of small families, or even singleton-families (thus essentially considering Grothendieck's \emph{bifibrations}), and then imposed smallness conditions to effectively enable inductive generation for large families of data, such as asking each fibre of the given functor (\emph{i.e.} each category of $\ca$-objects with fixed underlying $\cx$-object) to form a small complete lattice, as it was done by Wyler in his milestone papers \cite{Wyler71a, Wyler71b}. While credit for having elegantly introduced topologicity of a concrete category using large families is due to \cite{Brummer}, this approach resonated with a wider audience only after the appearance of Herrlich's important article \cite{Herrlich74}, with its notions quickly expanded upon in other papers, such as \cite{T76,W78,T78}.

For small families and, more generally, for small cocones, the concept of \emph{weak} inductive generation was, without Trnkov\'{a}'s knowledge at the time, considered earlier by Hoffmann in \cite{H72}, under a different name and in rather cumbersome notation, and it appeared in published form only later, in \cite{H76}. Afterwards, unaware of Trnkov\'{a}'s notion, the authors of \cite{T79, TW79, STWW80, T80} undertook
a systematic and coherent study of the $\cx$-based categories $\ca$ admitting weak inductive generation, showing their usefulness in the categorical investigation of a wide range of mathematical structures. These papers reconcile many themes studied earlier in the the more restrictive context of topological categories and therefore call the functors involved (presenting the categories $\ca$ as concrete over $\cx$) \emph{semi-topological}, a term that had been used somewhat hiddenly in \cite{H76}. Even though every such functor may be presented as the composite of a full reflective embedding followed by a topological functor, the occurrence of these functors is by no means restricted to the realm of topology. Therefore, on Herrlich's suggestion, they were renamed as \emph{solid} later on, a term adopted in \cite{BT90, AHS} and used henceforth by others, for example in \cite{Sousa95}.

In his thesis \cite{Anghel}, published and extended in \cite{Anghel1, Anghel2}, Anghel takes the study of the (then) semi-topological functors comprehensively to the level of enriched category theory \cite{Kelly82}. However, in order to do so, he needed to utilize the full range of the theory and often to impose additional conditions on the categories at issue, making it somewhat hard for the non-expert to apply his results. The purpose of this article is therefore to present a largely self-contained theory of solid functors in the easily presentable context of {\em order-enriched} categories and functors and their applications, that is: in an environment that has gained considerable attention in recent years; see, for example, \cite{AS17, ASV15, CS11, CS17, Sousa17}.\footnote{For the purpose of consistency with these papers, but at the price of divergence from other works (such as \cite{HST2014, Fritz2017}), in this paper we understand ``order" to mean what is generally referred to as ``partial order". But we stress the fact that the theory presented here carries through smoothly when ``order" means just ``preorder" in more common parlance, perhaps even more so than in the partially ordered context. In fact, many general constructions lead from partially ordered sets just to preordered sets, which at the end have to be subjected to the reflector to enforce separation ( = anti-symmetry), as demonstrated also by some of the examples presented in Section \ref{sec examples}.}  Explicitly then, the hom-sets of our categories come equipped with a partial order that is preserved by the composition of the category and by the hom-maps of any functors departing from them, thus providing also an elementary 2-categorical context in which all 2-cells are given by order. However, even in this very simplified context one quickly arrives at subtleties that hinder a seamless transition of notions and results from the ordinary to the enriched context.

Therefore, in Section \ref{sec strongly solid} we first present a notion of solidity for ordered functors, called {\em strongly order-solid}\footnote{In this paper the easily defined strongly order-solid functors appear before the more natural, but also slightly more complex, notion of order-solid functor, since we are not aware of examples of the latter type of functors not already covered by the former.}, which on first sight seems to add only a minor order-related condition to the ordinary notion. Nevertheless, it captures an extensive list of relevant examples, some of which appear in Section \ref{sec examples}. It then turns out that the seemingly mild additional condition which makes ordinarily solid functors strongly  order-solid already guarantees that they become solid as order-enriched functors in Anghel's sense \cite{Anghel1}, called {\em order-solid} here. We present these functors in Section \ref{sec order solid} without assuming the reader's familiarity with \cite{Kelly82}. While strongly  order-solid functors are easily seen to be order-solid, the converse question, whether every order-solid functor is strongly order-solid, is still open.

A central goal of the paper is the characterization of strongly  order-solid and of order-solid functors in terms of their behaviour vis-a-vis weighted limits and colimits. In Theorem \ref{thm characterization} we characterize strongly  order-solid functors using inserters, and in Theorem \ref{strong vs order} we state that they ``lift" the existence of weighted (co)limits for diagrams of any given shape. We study the behaviour of order-solid functors on weighted colimits in Section \ref{sec colim} and characterize order-solid functors when the ``base" category is tensored (Theorem \ref{thm tensored}). The list of examples in Section \ref{sec examples} culminates in a theorem on categories of general ordered algebras; Theorem \ref{alg fun solid} asserts that algebraic functors between them are always strongly  order-solid as soon as they admit free algebras over every ordered set. The category of ordered vector spaces, considered as an ordered category via the positive cones of its objects, falls outside the scope of this theorem, but its positive-cone functor to the category of partially ordered sets is still strongly order-solid. When considered as a discretely ordered category, it serves as a resource to demonstrate that certain conditions of our characterization theorems are essential.

\section{Strongly order-solid functors}\label{sec strongly solid}

We generally assume our categories and functors to be enriched in the Cartesian closed category $\Pos$ of (partially) ordered sets and their monotone (= order-preserving) maps and simply call them {\em ordered}\footnote{As mentioned in the Introduction, and as will become apparent in Section \ref{sec examples}, for many purposes we may alternatively work with the Cartesian closed category $\Ord$ of preordered sets and their monotone maps.}. Hence, the hom-sets of an ordered category $\ca$ carry an order which is preserved by composition with morphisms from either side, and the hom-maps of an ordered functor $P:\ca\to\cx$  preserve the order as well. In accordance with \cite{Kelly82}, whenever necessary for clarity, we write $\ca_o$ for the underlying ordinary category of an ordered category $\ca$, and likewise for ordered functors. 

Recall that the ordered functor $P:\ca\to\cx$ has a left adjoint $F:\cx\to\ca$ in the order-enriched sense if  there are order-isomorphisms
$$\ca(FX,B)\cong\cx(X,PB)$$
that are natural in $X\in{\rm ob}\cx$ and $B\in{\rm ob}\ca$; we call $P$ {\em order-right adjoint} in this case. For that to happen it suffices that, for every $\cx$-object $X$, one finds a (tacitly chosen) $P$-universal arrow $e:X\to PA$ with $A\in{\rm ob}\ca$ \cite{MacLane71} which has the additional property of being {\em order}-$P$-{\em epi(morphi)c}, that is: whenever $Pr\cdot e \leq Ps\cdot e$ for any morphisms $r,s:A\to B$ in $\ca$, then $r\leq s$; equivalently, the ordered functor $P$ is right adjoint in the ordinary sense such that all adjunction units are order-$P$-epic.

Given a (potentially large\footnote{ That is: the size of the indexing system $I$ may be as large as the size of the class of all morphisms of $\ca$.}) family $D=(D_i)_{i\in I}$ of objects in $\ca$, we consider the (potentially very large\footnote{We use the term ``very large" informally, to refer to collections of (potentially proper) classes, called conglomerates in \cite{AHS}. A formalization of the term does not seem to be justified in this paper, since one may, of course, avoid the formation of $D\downarrow \ca$ and $PD\downarrow\cx$ (the individual objects of which may already be large), but one will then have to accept universal quantification over these entities: see Definition \ref{def strongly solid}.}) category $D\downarrow\ca$ whose objects are pairs $(\alpha, A)$ with an $\ca$-object $A$ and a family $\alpha=(\alpha_i)_{i\in I}$ of $\ca$-morphisms $\alpha_i:D_i\to A$, shortly written as $\alpha:D\to A$; a morphism $t:(\alpha,A)\to(\beta,B)$ is given by an $\ca$-morphism $t:A\to B$ satisfying $t\cdot\alpha_i=\beta_i$ for all $i\in I$, shortly written as $t\cdot\alpha=\beta$. Of course, $D\downarrow A$ and, likewise, $PD\downarrow\cx$ inherit the order from $\ca$ and $\cx$, respectively, making both categories ordered, as well as the $P$-induced functor
$$P_D:(D\downarrow\ca)\to(PD\downarrow\cx),\quad (\alpha,A)\mapsto(P\alpha,PA).$$

\begin{definition}\label{def strongly solid} 
An ordered functor $P:\ca\to\cx$ is {\em strongly order-solid} if the functor $P_D$ is order-right adjoint for every family $D$ of $\ca$-objects. Equivalently, given any $D$, for every family $\xi:PD\to X$ in $\cx$ there is a (tacitly chosen) family $\alpha:D\to A$ in $\ca$ and an $\cx$-arrow $q:X\to PA$  such that
\begin{itemize} 
\item[1.] $(\alpha, A, q)$ is a $P${\em-extension of} $\xi$, that is: $P\alpha=q\cdot\xi$; 
\item[2.] $(\alpha, A, q)$ is {\em universal} with respect to property 1, that is: for every family $\beta:D\to B$ in $\ca$ and every $\cx$-arrow $f:X\to PB$ with $P\beta=f\cdot\xi$ one has a unique\footnote{Uniqueness comes for free in the presence of condition 3, but only so because here we understand ``ordered" to entail anti-symmetry. This observation applies analogously to many subsequent notions in this paper.} $\ca$-morphisms $t:A\to B$ with $t\cdot\alpha=\beta$ and $Pt\cdot q=f$; 
\item[3.] $q:X\to PA$ is order-$P$-epimorphic.
\end{itemize}
The three properties together make $(\alpha,A,q)$ a {\em strongly order-universal $P$-extension} of $\xi$.
\end{definition}

\begin{remarks}\label{first rem}
(1) Just as order-right adjoint functors are in particular right-adjoint ordinary functors, every strongly  order-solid functor is in particular solid in the ordinary sense and therefore faithful; see Lemma 3.2 of \cite{T79}, the proof of which uses a Cantor-type diagonal argument, as presented more generally in \cite{BT78}. But if $P$ is faithful, given a family $\xi:PD\to X$, any family $\beta:D\to B$ with $P\beta=f\cdot\xi$ is already determined by $B$ and $f:X\to PB$. Hence, the existence requirement of universal $P$-extensions for all $D$ and $\xi$ amounts precisely to Trnkov\'{a}'s admittance of weak inductive generation, as recorded at the beginning of the Introduction, to which we have only added the condition that all universal $P$-extensions be order-$P$-epic to make the ordered functor $P$ strongly order-solid.

To see that a strongly  order-solid functor as defined in \ref{def strongly solid} is faithful, one in fact does not need to resort to the above argument, as a stronger property may be shown easily: see Proposition \ref{prop order ff} below.

(2) Being in particular solid in the ordinary sense, a strongly  order-solid functor $P:\ca\to\cx$ certainly enjoys all the ``lifting properties" of solid functors, such as: if $\cx$ has all ordinary (co)limits (of diagrams of a specified shape), so does $\ca$ \cite{T79, AHS}; if $\cx$ is totally cocomplete (so that its Yoneda embedding has a left adjoint in the ordinary sense), so is $\ca$ \cite{T80}. 

(3) All fully faithful order-right adjoint functors are strongly order-solid, and so are composites of strongly  order-solid functors.
\end{remarks}

While we postpone the discussion of the behaviour of strongly  order-solid functors with respect to {\em weighted} (co)limits until Sections \ref{sec order solid} and \ref{sec colim}, here we consider one (easy, but important) type of weighted limit since it helps clarifying the relationship of the notions of strongly  order-solid functor and ordinarily solid functor.

\begin{definition}\label{def order ff}
(1) Recall that an {\em inserter} of a pair of morphisms $r,s:A\to B$ in an ordered category $\ca$ is a morphism $u:U\to A$ with $r\cdot u\leq s\cdot u$ that is universal with this property: any $v:V\to A$
 with $r\cdot v\leq s\cdot v$ factors (uniquely) as $v=u\cdot j$; moreover, $u$ is required to be {\em order mon(omorph)ic}, so that $u\cdot h\leq u\cdot k$ always implies $h\leq k$.

(2) An ordered functor $P:\ca\to\cx$ is {\em order-faithful} if $1_{PA}$ is order-$P$-epic for all objects $A$ in $\ca$, that is: $Pr\leq Ps$ for morphisms $r,s:A\to B$ in $\ca$ always implies $r\leq s$.
\end{definition}

\begin{proposition}\label{prop order ff}
A strongly  order-solid functor is solid in the ordinary sense, as well as order-right adjoint and order-faithful, and it preserves (any existing) inserters.
\end{proposition}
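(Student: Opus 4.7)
The plan is to verify the four conclusions one by one, in each case applying the strong order-solidity of $P$ to a carefully chosen family $D$ of $\ca$-objects and a suitable $\cx$-arrow $\xi$. Ordinary solidity was already observed in Remark \ref{first rem}(1): conditions 1 and 2 of Definition \ref{def strongly solid} are exactly the weak-inductive-generation requirement, while the faithfulness demanded by the ordinary notion will follow once order-faithfulness is proved. For order-right adjointness, I apply the definition to the empty family $D$; in that case $D\downarrow\ca\cong\ca$ and $PD\downarrow\cx\cong\cx$, and $P_D$ is essentially $P$, so the hypothesis gives, for every $\cx$-object $X$, an order-$P$-epimorphic $P$-universal arrow $q_X:X\to PA_X$, which is the criterion for order-adjointness recorded just before Definition \ref{def strongly solid}.

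For order-faithfulness, I take $D$ to be the singleton family $(A)$ and $\xi:=1_{PA}$, with resulting strongly order-universal $P$-extension $(\alpha,A',q)$. Given $r,s:A\to B$ with $Pr\leq Ps$, the universality applied to the $P$-extensions $(r,B,Pr)$ and $(s,B,Ps)$ of $\xi$ yields unique $t_r,t_s:A'\to B$ with $t_r\cdot\alpha=r$, $t_s\cdot\alpha=s$, and $Pt_r\cdot q=Pr\leq Ps=Pt_s\cdot q$. The order-$P$-epicness of $q$ gives $t_r\leq t_s$, and composing on the right with $\alpha$ yields $r\leq s$.

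For inserter-preservation, let $u:U\to A$ be the inserter of $r,s:A\to B$ in $\ca$. The inequality $Pr\cdot Pu\leq Ps\cdot Pu$ is immediate. Given $w:W\to PA$ with $Pr\cdot w\leq Ps\cdot w$, I use the empty-family case of strong order-solidity at $W$ to factor $w=P\bar w\cdot q_W$ uniquely through the order-$P$-epic universal $q_W:W\to PA_W$; the hypothesis then reads $P(r\bar w)\cdot q_W\leq P(s\bar w)\cdot q_W$, and order-$P$-epicness of $q_W$ delivers $r\bar w\leq s\bar w$, so the inserter property of $u$ supplies $j':A_W\to U$ with $u\cdot j'=\bar w$; the arrow $j:=Pj'\cdot q_W$ then satisfies $Pu\cdot j=w$. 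Order-monicity of $Pu$ is obtained by the same factorization trick: for $h,k:Y\to PU$ with $Pu\cdot h\leq Pu\cdot k$, factoring $h=Ph'\cdot q_Y$ and $k=Pk'\cdot q_Y$ pulls the inequality back to $u\cdot h'\leq u\cdot k'$ in $\ca$, whence order-monicity of $u$ yields $h'\leq k'$ and hence $h\leq k$. Uniqueness of the factorization $j$ of $w$ through $Pu$ then follows by order-monicity of $Pu$ and anti-symmetry.

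The most delicate point, I expect, is the order-monicity of $Pu$: while the order-monic property of $u$ concerns only $\ca$-morphisms, one needs to establish order-monicity of $Pu$ against arbitrary $\cx$-morphisms into $PU$. The factorization through the order-$P$-epic universal $q_Y$ — itself produced by strong order-solidity applied to the empty family — is the device that bridges the two worlds and turns order-comparisons in $\cx$ into order-comparisons in $\ca$.
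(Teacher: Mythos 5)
Your proof is correct and follows essentially the same route as the paper: the empty family yields order-right adjointness, and the singleton family $(1_{PA}:PA\to PA)$ yields order-faithfulness by exactly the paper's factorization-through-$q$ argument. For inserter preservation the paper merely cites the standard fact that order-right adjoint functors preserve weighted limits; your explicit factorization of $w$ and of $h,k$ through the order-$P$-epic units $q_W$, $q_Y$ is precisely that standard argument written out, so nothing is genuinely different.
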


\begin{proof}
The first claim is obvious; see Remark \ref{first rem}(1). Since, in Definition \ref{def strongly solid}, families are allowed to be empty, order-right adjointness follows. It is standard to confirm the preservation of inserters (or any weighted limits) by order-right adjoint functors. So, only order-faithfulness of a strongly  order-solid functor $P$ needs to be shown here. But given $r,s:A\to B$ in $\ca$ with $Pr\leq Ps$, let $(a:A\to C, \,C, \,q:PA\to PC)$ be a universal $P$-extension of the singleton family $(1_{PA}:PA\to PA)$. Then $Pr$ and $Ps$ must both factor through $q$, so that for some $r',s':C\to B$ one has $Pr=Pr'\cdot q$ and $Ps=Ps'\cdot q$, as well as $r=r'\cdot a$ and $s=s'\cdot a$. Since $q$ is order-$P$-epic, $r'\leq s'$ follows, which implies $r\leq s$.
\end{proof}

\begin{proposition}\label{prop characterization}
Let $\ca$ have inserters. Then an ordered functor $P:\ca\to\cx$ is strongly  order-solid if, and only if, P is solid in the ordinary sense and order-faithful and preserves inserters.
\end{proposition}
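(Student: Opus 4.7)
The plan is to dispatch the forward direction immediately by appealing to Proposition \ref{prop order ff}, which already asserts that any strongly order-solid functor is solid in the ordinary sense, order-faithful, and preserves inserters. All the real work lies in the converse, so I will concentrate on that.

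For the converse, assume $P$ is solid in the ordinary sense, order-faithful, and preserves inserters, and let $D=(D_i)_{i\in I}$ be a family of $\ca$-objects with $\xi:PD\to X$ a family of $\cx$-morphisms. First I would invoke ordinary solidity to fix a universal $P$-extension $(\alpha,A,q)$ of $\xi$ in the ordinary sense; this takes care of conditions 1 and 2 of Definition \ref{def strongly solid} automatically, so the entire task reduces to verifying condition 3, namely that $q:X\to PA$ is order-$P$-epic. Concretely, given $r,s:A\to B$ in $\ca$ with $Pr\cdot q\leq Ps\cdot q$, I need to deduce $r\leq s$.

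The key idea will be to use the inserter of $r$ and $s$ as a test object and let the universal property of $(\alpha,A,q)$ furnish a splitting. Explicitly, let $u:U\to A$ be the inserter of $r,s$ in $\ca$; since $P$ preserves inserters, $Pu$ is the inserter of $Pr,Ps$ in $\cx$, and the assumption $Pr\cdot q\leq Ps\cdot q$ supplies a factorization $q=Pu\cdot j$ for a unique $j:X\to PU$. A parallel use of order-faithfulness, applied to the componentwise inequalities $Pr\cdot P\alpha_i\leq Ps\cdot P\alpha_i$ (obtained by precomposing with $\xi_i$), yields $r\cdot\alpha_i\leq s\cdot\alpha_i$ and hence a factorization $\alpha=u\cdot\alpha'$ through the inserter for a unique family $\alpha':D\to U$. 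Because inserters are ordinary monomorphisms and $P$ preserves them, $Pu$ is monic, so cancelling $Pu$ in the identity $Pu\cdot P\alpha'=P\alpha=q\cdot\xi=Pu\cdot j\cdot\xi$ promotes $(\alpha',U,j)$ to a genuine $P$-extension of $\xi$.

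At this point the universal property of $(\alpha,A,q)$ supplies a unique $t:A\to U$ with $t\cdot\alpha=\alpha'$ and $Pt\cdot q=j$. The composite $u\cdot t:A\to A$ then satisfies $u\cdot t\cdot\alpha=\alpha$ and $P(u\cdot t)\cdot q=q$, so the uniqueness clause of ordinary universality forces $u\cdot t=\id_A$; the desired inequality follows from the defining inequality $r\cdot u\leq s\cdot u$ for the inserter, via $r=r\cdot u\cdot t\leq s\cdot u\cdot t=s$. The main subtlety I expect to require care is the verification that $(\alpha',U,j)$ really is a $P$-extension, which rests on the monicity of $Pu$; this is precisely the point where preservation of inserters (as weighted limits, not merely existence of inserter objects in $\cx$) is genuinely needed. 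Everything else is bookkeeping against Definition \ref{def strongly solid} and Definition \ref{def order ff}.
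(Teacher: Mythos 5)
Your proposal is correct and follows essentially the same route as the paper's own proof: reduce to showing that the ordinary universal $P$-extension already has an order-$P$-epic $q$, form the inserter $u:U\to A$ of $r,s$ in $\ca$, use preservation and order-faithfulness to factor $q$ and $\alpha$ through it, and then extract a splitting $u\cdot t=\id_A$ from the universal property. The only cosmetic difference is that the paper cancels via $q$ being ordinarily $P$-epic where you invoke the uniqueness clause of universality; these amount to the same thing.
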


\begin{proof}
After Proposition \ref{prop order ff}, only the ``if"-part needs proof. To this end, it suffices to show that, for the  given families $D$ and $\xi:PD\to X$ as in Definition \ref{def strongly solid}, the universal $P_o$-extension $(\alpha, A, q)$ with respect to the ordinary functor $P_o:\ca_o\to\cx_o$ serves also as a strongly order-universal $P$-extension, that is: $q:X\to PA$ 
is necessarily order-$P$-epic. Hence, assuming $Pr\cdot q\leq Ps\cdot q$ for $r,s:A\to B$ in $\ca$ we form the inserter $u:U\to A$ of the pair $r, s$ in $\ca$ which, by hypothesis, is preserved by $P$. So $q$ factors as $q=Pu\cdot f$, with $f:X\to PU$. Since $P(r\cdot\alpha)=Pr\cdot q\cdot\xi\leq Ps\cdot q\cdot\xi=P(s\cdot\alpha)$ and $P$ is order-faithful, $r\cdot\alpha\leq s\cdot\alpha$ follows. Consequently, the inserter $u$ makes the family $\alpha$ factor as $\alpha = u\cdot\beta$. Since $Pu\cdot f\cdot\xi=q\cdot\xi=Pu\cdot P\beta$ and $Pu$ (as an inserter) is monic in $\cx$, one obtains $f\cdot\xi=P\beta$ and therefore an $\ca$-morphism $t:A\to U$ with $Pt\cdot q= f$. From $P(u\cdot t)\cdot q=Pu\cdot f=q$ and $q$ being (ordinarily) $P$-epic, one derives $u\cdot t=1$. Since $r\cdot u\leq s\cdot u$, this finally implies $r\leq s$.
\end{proof}

\begin{theorem}\label{thm characterization}
Let $\cx$ have inserters. An ordered functor $P:\ca\to\cx$ is strongly  order-solid if and only if
\begin{itemize}
\item[{\em (a)}] $P$ is solid as an ordinary functor;
\item[{\em (b)}] $\ca$ has inserters and $P$ preserves them;
\item[{\em (c)}] $P$ is order-faithful.
\end{itemize}
\end{theorem}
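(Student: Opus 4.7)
The plan is as follows. The ``if'' direction follows immediately from Proposition \ref{prop characterization}: conditions (a), the first half of (b) (existence of inserters in $\ca$), (c), and the second half of (b) (preservation of inserters by $P$) together are precisely the hypotheses that proposition requires, and its conclusion is that $P$ is strongly order-solid.

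For the ``only if'' direction, I would first invoke Proposition \ref{prop order ff}, which yields (a), (c), and the preservation of any existing inserters by $P$; the remaining task is therefore to show that $\ca$ itself has inserters. Given $r, s: A \to B$ in $\ca$, I would form the inserter $u_0: U_0 \to PA$ of $Pr, Ps$ in $\cx$. By the order-faithfulness of $P$, a morphism $h: C \to A$ in $\ca$ satisfies $r \cdot h \leq s \cdot h$ exactly when $Ph$ factors through $u_0$ as $Ph = u_0 \cdot k_h$ for a unique $k_h: PC \to U_0$; let $\ch$ denote the class of all such $h$. Applying the strong order-solidity of $P$ to the (possibly very large) family $D = (C_h)_{h \in \ch}$ equipped with the $\cx$-morphisms $\xi_h = k_h$ (into the common codomain $X = U_0$) produces a strongly order-universal $P$-extension $(\iota, U, q: U_0 \to PU)$. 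The $\ca$-cocone $(h: C_h \to A)_h$ paired with $u_0: U_0 \to PA$ satisfies the required compatibility $Ph = u_0 \cdot k_h$, so the universal property furnishes a unique $u: U \to A$ with $u \cdot \iota_h = h$ for all $h \in \ch$ and $Pu \cdot q = u_0$.

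I would then check that $u$ is the inserter of $r, s$ in $\ca$ by verifying its three defining properties: (i) $r \cdot u \leq s \cdot u$, which follows from $Pr \cdot u_0 \leq Ps \cdot u_0$ combined with the order-$P$-epicity of $q$, precisely as in the argument for order-faithfulness in the proof of Proposition \ref{prop order ff}; (ii) any $v: V \to A$ with $r \cdot v \leq s \cdot v$ belongs to $\ch$, so $v = u \cdot \iota_v$ gives the desired factorization through $u$; (iii) $u$ is order-monic in $\ca$. I expect (iii) to be the main obstacle, since $Pu \cdot q = u_0$ being order-monic in $\cx$ does not automatically force $Pu$ to be order-monic. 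The plan is to start from an assumed inequality $u \cdot h' \leq u \cdot k'$ for $h', k': W \to U$ and to exploit the order-$P$-epicity of $q$ together with the uniqueness half of the strongly order-universal property of $(\iota, U, q)$ applied to a suitably chosen auxiliary datum, reducing via the order-faithfulness of $P$ to the desired conclusion $h' \leq k'$.
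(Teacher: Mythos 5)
Your overall strategy coincides with the paper's: the ``if'' direction is Proposition \ref{prop characterization}, and for ``only if'' you build the inserter of $r,s$ by taking the inserter $u_0$ of $Pr,Ps$ in $\cx$ and then a strongly order-universal $P$-extension of the (large) family of all morphisms into $A$ that insert $r,s$ (the paper indexes this family by pairs $(D,x:PD\to U_0)$ admitting a lift $a:D\to A$ with $Pa=u_0\cdot x$, which by order-faithfulness is the same class as your $\ch$). Your steps (i) and (ii) are correct and match the paper. However, step (iii) — order-monicity of $u$ — is left as an unproved ``plan'', and this is precisely the only nontrivial point of the whole direction; as you yourself observe, $Pu\cdot q=u_0$ being order-monic does not make $Pu$ order-monic, and ``order-$P$-epicity of $q$ plus the uniqueness half of the universal property applied to a suitable auxiliary datum'' does not by itself bridge that gap. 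So the proposal has a genuine hole here.

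The paper's resolution is a self-application trick that you should make explicit. Since $r\cdot u\leq s\cdot u$ (your step (i)), the morphism $u:U\to A$ is itself a member of the class $\ch$ over which the family was formed (the family ranges over \emph{all} of $\ca$, so this is legitimate even though $U$ was produced from that family); equivalently, the inserter property of $u_0$ yields $z:PU\to U_0$ with $u_0\cdot z=Pu$, and the cocone component of the universal extension at this index is a morphism $t:U\to U$ with $Pt=q\cdot z$. From $u_0\cdot z\cdot q=Pu\cdot q=u_0$ and $u_0$ monic one gets $z\cdot q=1_{U_0}$; then $Pt\cdot q=q\cdot z\cdot q=q$ forces $t=1_U$ because $q$ is (ordinarily) $P$-epic, whence $q\cdot z=1_{PU}$. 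Thus $q$ and $z$ are isomorphisms, so $Pu=u_0\cdot z$ is order-monic because $u_0$ is, and now $u\cdot c\leq u\cdot d$ gives $Pc\leq Pd$ and hence $c\leq d$ by order-faithfulness. Without establishing that $q$ is invertible (or some equivalent statement), step (iii) does not go through, so you should incorporate this argument to complete the proof.
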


\begin{proof}
That the conditions (a-c) are sufficient for $P$ to be strongly  order-solid has been confirmed in Proposition \ref{prop characterization}. Conversely, only the existence of inserters in $\ca$ still needs to be shown when $\cx$ has them and $P$ is strongly order-solid. 
To this end, for any morphisms 
$r,s:A\to B$ in $\ca$ we form the inserter $k:X\to PA$ of $Pr, Ps$ in $\cx$ and then consider the family $\xi$ of all pairs $(D,x)$ with $D\in {\rm ob}\ca$ and $x:PD\to X$ an $\cx$-morphism such that there is a (necessarily unique) $\ca$-morphism $a:D\to A$ with $Pa=k\cdot x$. (Note that, as an ordinarily solid functor, P is faithful.)
With $q:X\to PU$ forming a universal $P$-extension of $\xi$ we then see that $k$ must factor as $k=Pu\cdot q$ with $u: U\to A$ in $\ca$. Since $P(r\cdot u)\cdot q=Pr\cdot k\leq Ps\cdot k=P(s\cdot u)\cdot q$ and $q$ is order-$P$-epic, $r\cdot u\leq s\cdot u$ follows.

Furthermore, by the inserter property of $k$, any $v:V\to A$ in $\ca$ with $r\cdot v\leq s\cdot v$ produces a morphism $y:PV\to X$ with $k\cdot y=Pv$. This makes $(V,y)$ a member of the family $\xi$, which implies that there is an $\ca$-morphism $j:V\to U$ with $Pj=q\cdot y$. From $P(u\cdot j)=Pu\cdot q\cdot y=k\cdot y=Pv$ one obtains $u\cdot j= v$, as required. We note that the same argumentation may also be applied to $u$ in place of $v$; it produces morphisms $z: PU\to X$ and $t:U\to U$ with $k\cdot z=Pu$ and $Pt=q\cdot z$. Since $k$ is monic, from $k\cdot z\cdot q=k$ one first obtains $z\cdot q=1_X$, and then $Pt\cdot q=q\cdot z\cdot q=q$ forces $t=1_U$ since $q$ is $P$-epic. Consequently, $q\cdot z=1_{PU}$, so that $q$ and $z$ must be isomorphisms in $\cx$.

It remains to be shown that $u$ is order-monic. If $u\cdot c\leq u\cdot d$ with $c,d:C\to U$ in $\ca$, applying $P$ to the inequality we first obtain $k\cdot z\cdot Pc\leq k\cdot z\cdot Pd$ and then $Pc\leq Pd$, since $k$ is order-monic and $z$ an isomorphism. As $P$ is order-faithful, $c\leq d$ follows.
\end{proof}

We suppose that the existence assumptions regarding inserters are essential in Proposition \ref{prop characterization} and Theorem \ref{thm characterization} but have not been able yet to confirm this conjecture. However, preservation of inserters is: in Example \ref{OVec} we exhibit a solid and order-faithful functor $P:\ca\to\cx$ (thus satisfying conditions (a) and (c) of the above theorem), with both $\cx$ and $\ca$ having inserters, but with $P$ failing to preserve them; in particular, $P$ fails to be order-right adjoint and, {\em a fortiori}, strongly order-solid. This still leaves open the following question:

\begin{oproblem}
Is a (ordinarily) solid, order-right adjoint and order-faithful functor $P:\ca\to\cx$ strongly order-solid? Equivalently, when $\cx$ has inserters, do these conditions on $P$ imply the existence of inserters in $\ca$?
\end{oproblem}

\begin{remark}\label{remark E-char}
We recall from \cite{T79} (see Theorem 1.2 of \cite{BT90} for a ``direct" proof) that an ordinary functor $P:\ca\to\cx$ is solid if, and only if, $P$ is right adjoint and there is a class $\ce$ of morphisms in $\ca$ such that
\begin{itemize}
\item[(A)] all adjunction co-units lie in $\ce$;
\item[(P)] the pushout of a morphism in $\ce$ along any morphism exists in $\ca$, and any such lies in $\ce$;
\item[(W)] the wide pushout (= co-intersection) of a (possibly large) family of morphisms in $\ce$ with common domain exists in $\ca$, and any such lies in $\ce$.
\end{itemize}
For any morphism class $\ce$, the category $\ca$ is said to be $\ce$-{\em cocomplete} if conditions (P) and (W) hold.
Note that (W) forces every morphism in $\ce$ to be an epimorphism in $\ca$ (see \cite{BT78, T79}). Hence, the class $\ce$ may be assumed to be a class of epimorphisms {\em a priori}. Furthermore then, if $\ca$ is $\ce$-cowellpowered, the consideration of small ( = set-indexed) families in (W) suffices.
\end{remark}

Following the proof for the ordinary characterization theorem of Remark \ref{remark E-char} as given in Theorem 1.2 of \cite{BT90}, we easily arrive at the following characterization for strongly  order-solid functors, which entails the ordinary version as the discretely ordered case.

\begin{theorem}\label{theorem E-char}
An ordered functor $P:\ca\to \cx$ is strongly  order-solid if, and only if, $P$ is order-right adjoint, and there exists a class $\ce$ of order-epimorphisms in $\ca$ such that the (ordinary) conditions {\em (A), (P), (W)} hold.
\end{theorem}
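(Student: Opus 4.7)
The plan is to adapt the proof of the ordinary characterization (Theorem 1.2 of \cite{BT90}, as recorded in Remark \ref{remark E-char}) to the order-enriched setting, with the class $\ce$ refined to consist of order-epimorphisms and with the order-$P$-epic condition of Definition \ref{def strongly solid} carried through each step.

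For the ``only if'' direction, order-right adjointness is immediate from Proposition \ref{prop order ff}. I would take $\ce$ to be the class of all \emph{order-$P$-semi-final} morphisms $e:A\to B$ in $\ca$: those for which, given any $f:A\to C$ in $\ca$ and any $g:PB\to PC$ in $\cx$ with $g\cdot Pe=Pf$, a unique $t:B\to C$ exists with $Pt=g$ and $t\cdot e=f$, and for which $Pe$ is moreover order-$P$-epic. Each such $e$ is automatically order-epic in $\ca$: from $r\cdot e\leq s\cdot e$ one obtains $P(r\cdot e)\leq P(s\cdot e)$, whence $Pr\leq Ps$ by order-$P$-epicness of $Pe$, and thus $r\leq s$ by order-faithfulness of $P$ (Proposition \ref{prop order ff}). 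Condition (A) is verified directly for $e=\varepsilon_A$: the ordinary part is standard, while order-$P$-epicness of $P\varepsilon_A$ follows by composing on the right with $\eta_{PA}$ to exploit the triangle identity $P\varepsilon_A\cdot\eta_{PA}=1_{PA}$, reducing $Pr\cdot P\varepsilon_A\leq Ps\cdot P\varepsilon_A$ to $Pr\leq Ps$ and then invoking order-faithfulness of $P$. Conditions (P) and (W) are verified as in \cite{BT90}, the only new content being preservation of order-$P$-epicness under the pushout and wide-pushout constructions: a routine inequality-diagram-chase in the relevant universal-property diagrams.

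For the ``if'' direction, given a family $\xi:PD\to X$, transpose each $\xi_i:PD_i\to X$ across the adjunction $F\dashv P$ (with unit $\eta$, counit $\varepsilon$) to obtain $\bar\xi_i:FPD_i\to FX$. The counits $\varepsilon_{D_i}$ lie in $\ce$ by (A); pushing each $\varepsilon_{D_i}$ out along $\bar\xi_i$ via (P) produces $\alpha_i:D_i\to A_i$ in $\ca$ together with morphisms $FX\to A_i$ in $\ce$, and the wide pushout of the $FX\to A_i$ via (W) yields a single morphism $e:FX\to A$ in $\ce$ and a cocone $\alpha:D\to A$. Setting $q:=Pe\cdot\eta_X$ produces the triple $(\alpha,A,q)$, whose status as a universal $P$-extension of $\xi$ is exactly the ordinary argument of \cite{BT90} applied via the adjunction. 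The order refinement that $q$ be order-$P$-epic follows from a short chase: given $Pr\cdot q\leq Ps\cdot q$, rewriting as $P(r\cdot e)\cdot\eta_X\leq P(s\cdot e)\cdot\eta_X$ and using that $\eta_X$ is order-$P$-epic (since $P$ is order-right adjoint) gives $r\cdot e\leq s\cdot e$, whereupon order-epicness of $e$ (since $e\in\ce$) yields $r\leq s$.

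The main technical obstacle is not conceptual but a matter of systematic bookkeeping: verifying that closure of $\ce$ under pushout and wide pushout preserves order-$P$-epicness of the images under $P$, and that each diagram chase in \cite{BT90} based on equalities upgrades to one based on inequalities. This relies uniformly on order-faithfulness of $P$ (obtained in the ``only if'' direction from Proposition \ref{prop order ff}, and automatic in the ``if'' direction once strong order-solidity is established) together with the observation that applying the pushout universal property to a compatible cone built from an inequality $Pr\cdot Pe'\leq Ps\cdot Pe'$ produces an inequality in $\ca$. Once this bookkeeping is in place, the proof of \cite{BT90} translates essentially verbatim, and both directions of the theorem follow.
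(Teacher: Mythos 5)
Your ``if'' direction is correct and coincides with the paper's argument: construct $q=Pe\cdot\eta_X$ with $e\in\ce$ as in \cite{BT90}, and deduce order-$P$-epicness of $q$ from order-$P$-epicness of $\eta_X$ together with order-epicness of $e$. The gap lies in the ``only if'' direction, in your choice of $\ce$. You put $e:A\to B$ in $\ce$ when every $g:PB\to PC$ with $g\cdot Pe=Pf$ for some $f:A\to C$ in $\ca$ lifts to $t:B\to C$ with $Pt=g$ and $t\cdot e=f$. This class does not contain the adjunction counits in general, so the assertion that ``the ordinary part [of (A)] is standard'' is false. Already for the solid functor $\Top\to\Set$ the counit $\varepsilon_A:FPA\to A$ is the identity-carried map from the discrete space on the underlying set of $A$; your condition applied to $e=\varepsilon_A$ would require that \emph{every} map $f:FPA\to C$ (i.e., every function out of the underlying set of $A$, since the domain is discrete) have its underlying map lift to a continuous map $A\to C$, which fails unless $A$ is discrete. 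In effect your condition forces $B$ to carry the ``final'' structure induced by $e$, which counits do not do; the same failure occurs for the strongly order-solid functor of Example \ref{exa-top0}.

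The paper's class avoids this: $e:A\to B$ lies in $\ce$ when $Pe$ is the comparison morphism of a universal $P$-extension of \emph{some} (possibly large) family $\xi:PD\to PA$. The counit then does belong to $\ce$, because $P\varepsilon_A$ is the universal $P$-extension of the two-member family $(\eta_{PA}:PA\to PFPA,\ 1_{PFPA})$; including the object $A$ itself, with structure map $\eta_{PA}$, among the data is precisely what allows one to define the comparison morphism $t$ as the given cocone component at $A$, rather than having to lift an arbitrary $\cx$-morphism $PA\to PC$ along $P$. With that class, your order-theoretic refinements (order-epicness of members of $\ce$ from order-$P$-epicness of the comparison morphisms; order-$P$-epicness of $P\varepsilon_A$ via the triangle identity and order-faithfulness; the deferral of (P) and (W) to \cite{BT90}) all go through and agree with the paper's proof. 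A minor further point: order-$P$-epicness of $Pe$ yields $r\leq s$ directly from $Pr\cdot Pe\leq Ps\cdot Pe$, so your intermediate step ``$Pr\leq Ps$, then order-faithfulness'' is a harmless but unnecessary detour.
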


\begin{proof} If $P$ is strongly order-solid, $P$ is order-right adjoint. Like in the proof for the ordinary case (see Theorem 2.1 of \cite{BT90}) one considers the class
$\ce$ of all those morphisms $e:A\to B$ in $\ca$ for which $Pe: PA\to PB$ is part of a universal $P$-extension of some family $\xi:PD\to PA$. But here, being order-$P$-epimorphic, such extension will make $e$ order-epic, {\em i.e.}, $r\cdot e\leq s\cdot e$ always implies $r\leq s$. Hence, $\ce$ is a class of order-epimorphisms which, being chosen as in the ordinary case, satisfies conditions (P) and (W). Furthermore, for the adjunction $F\dashv P$ with unit $\eta$ and co-unit $\varepsilon$, as in the ordinary case one has that, for every object $A$ in $\ca$,  $P\varepsilon_A:PFPA\to PA$ serves as a universal $P$-extension (of the pair $(\eta_{PA}: PA\to PFPA, 1_{PFPA}))$; but here we have to confirm that $P\varepsilon_A$ is order-$P$-epic. Indeed, since $P\varepsilon_A\cdot \eta_{PA}=1_{PA}$ and
$P$ is order-faithful by Proposition \ref{prop order ff}, for all $r,s:A\to B$ with $Pr\cdot P\varepsilon_A\leq Ps\cdot P\varepsilon_A$ one obtains $Pr\leq Ps$ and then $r\leq s$.
Consequently, $\varepsilon_A\in\ce$, which shows (A).

Conversely, we know that conditions (A), (P), (W) make $P$ solid as an ordinary functor, with universal $P$-extensions $(\alpha, A, q=Pe\cdot \eta_X:X\to PA)$ constructed in such a way that $e:FX\to A$ lies in the class $\ce$ (see Theorem 2.1 of \cite{BT90}). As $\eta_X$ is order-$P$-epic and $e$ is order-epic, $q$ must be order-$P$-epic, making it part of a strongly order-universal $P$-extension.
\end{proof}

\section{Examples of strongly  order-solid functors}\label{sec examples}

For many of our examples it is convenient to first consider them in a preorder-enriched context, so that $\Pos$ gets replaced by the larger Cartesian closed category $\Ord$ of preordered sets. We will freely use the terms introduced in Section \ref{sec strongly solid} in this context and thus talk about {\em preordered categories} and {\em functors},
 {\em strongly preorder-universal $P$-extensions} and {\em strongly preorder-solid} functors, 
 as well as about {\em preorder-$P$-epic} morphisms and {\em preorder-faithful} functors, keeping in mind that the latter two notions will no longer automatically imply that the morphisms will be  $P$-epic or the functors be faithful in the ordinary sense.

The following proposition turns out to be useful in many concrete situations.

\begin{proposition}\label{square}
In the commutative diagram
$$\xymatrix{\ca
\ar@{^(->}[r]^{H}\ar[d]_{P}\ar@{}[rd]|{\rm{}} & \ca'
\ar[d]^{P'}\\\cx\ar@{^(->}[r]_{J} & \cx'}$$
of preordered functors, let $H$ and $J$ be full emdeddings, with $H$ preorder-right adjoint. If $P'$ is strongly preorder-solid, then $P$ is also strongly preorder-solid, and trivially even strongly order-solid when it is an ordered functor.
\end{proposition}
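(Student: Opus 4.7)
The plan is to lift the data from $\ca,\cx$ up into $\ca',\cx'$ via the full embeddings $H$ and $J$, apply the strong preorder-solidity of $P'$ there, and then descend back via the adjunction $L\dashv H$ supplied by the preorder-right adjointness of $H$ (write $\eta\colon 1\to HL$ for its unit; the counit is invertible because $H$ is fully faithful). Concretely, given a family $D=(D_i)_{i\in I}$ of $\ca$-objects and a family $\xi\colon PD\to X$ in $\cx$, I would apply strong preorder-solidity of $P'$ to the family $HD$ together with the $\cx'$-family $J\xi\colon P'HD\to JX$ (using the identity $JP=P'H$) to obtain a strongly preorder-universal $P'$-extension $(\alpha',A',q')$ with $\alpha'\colon HD\to A'$ in $\ca'$ and $q'\colon JX\to P'A'$ in $\cx'$.

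Next, I would set $A:=LA'$ and exploit full faithfulness of $H$ and $J$ to define $\alpha\colon D\to A$ as the unique family satisfying $H\alpha=\eta_{A'}\cdot\alpha'$ and $q\colon X\to PA$ as the unique morphism satisfying $Jq=P'\eta_{A'}\cdot q'$ (noting that $P'HA=P'HLA'=JPA$). The $P$-extension identity $P\alpha=q\cdot\xi$ then drops out on applying $J$ and invoking its faithfulness. For universality, given $\beta\colon D\to B$ in $\ca$ and $f\colon X\to PB$ in $\cx$ with $P\beta=f\cdot\xi$, I would push to $\ca'$ to obtain $H\beta\colon HD\to HB$ and $Jf\colon JX\to P'HB$ satisfying $P'H\beta=Jf\cdot J\xi$, extract from the universality of $(\alpha',A',q')$ the unique $t'\colon A'\to HB$ with $t'\cdot\alpha'=H\beta$ and $P't'\cdot q'=Jf$, and then transpose across $L\dashv H$ to produce a unique $t\colon A\to B$ with $Ht\cdot\eta_{A'}=t'$. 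Applying $H$ and $J$ and invoking their faithfulness yields $t\cdot\alpha=\beta$ and $Pt\cdot q=f$; uniqueness of $t$ propagates back from uniqueness of $t'$ through the adjunction bijection.

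The step I expect to be the main obstacle, and the one that genuinely uses preorder-right adjointness of $H$ rather than merely its full faithfulness, is verifying condition~3 that $q$ is preorder-$P$-epic. Given $r,s\colon A\to B$ in $\ca$ with $Pr\cdot q\leq Ps\cdot q$, applying the monotone functor $J$ and rewriting via $Jq=P'\eta_{A'}\cdot q'$ produces $P'(Hr\cdot\eta_{A'})\cdot q'\leq P'(Hs\cdot\eta_{A'})\cdot q'$; since $q'$ is preorder-$P'$-epic, this yields $Hr\cdot\eta_{A'}\leq Hs\cdot\eta_{A'}$; and since $H$ is preorder-right adjoint, the unit $\eta_{A'}$ is preorder-$H$-epic, forcing $r\leq s$. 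Finally, the parenthetical assertion that $P$ is strongly order-solid when it happens to be an ordered functor is immediate, since the three defining conditions for the triple $(\alpha,A,q)$ do not depend on whether the ambient hom-orders are antisymmetric.
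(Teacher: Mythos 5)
Your proof is correct and follows essentially the same route as the paper: the paper first notes abstractly that $JP=P'H$ is strongly preorder-solid (as a composite of the fully faithful preorder-right adjoint $H$ with $P'$) and that this descends through the fully faithful $J$, and then sketches exactly your explicit construction — composing a strongly preorder-universal $P'$-extension of $J\xi$ with the reflection $\eta_{A'}\colon A'\to HA$. Your detailed verification, including the key use of $q'$ being preorder-$P'$-epic followed by $\eta_{A'}$ being preorder-$H$-epic to establish condition~3, is just the unfolded version of that argument.
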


\begin{proof}
Being preorder-right adjoint, $H$ is strongly  preorder-solid, and so is its composite with the strongly  preorder-solid functor $P'$ (see Remark \ref{first rem}). Quite trivially now, as $JP$ is strongly  preorder-solid, with $J$ being fully faithful, also $P$ is strongly  preorder-solid. 
Explicitly then, one constructs a strongly preorder-universal $P$-extension $(\alpha,A,q)$ of a $P$-cocone $\xi:PD\to X$ by composing a strongly  preorder-universal $P'$-extension $(\alpha',A',q')$ of $J\xi:P'HD\to JX$ with (the $P'$-image of) a reflection $r:A'\to HA$ into $\ca$:
$$\xymatrix{JPD\ar[r]^{J\xi}&
JX\ar[dr]_{Jf}\ar[r]^{q'}\ar@<1ex>@/^15.pt/[rr]^{Jq}&
P'A'\ar@{-->}[d]^{P't'}\ar[r]^{P'r}&P'HA\!=\!JPA\ar@{.>}[dl]^{JPt}
\\
&&P'\!HB=JPB&}$$ 
\end{proof}

\begin{example}\label{exa-top0} 
The functor $$S:\Top_0\to \Pos$$ provides the underlying set of a T$_0$-topological space $A$ with the (dual of the) {\em specialization order}, so that $x\leq y$ in $SA$ means that the neighbourhood filter of $x$ is finer than that of $y$ (or that the ultrafilter fixed at $x$ converges to $y$). 
With $S$, the category $\Top_0$ becomes order-enriched, that is: $f\leq g:A\to B$ in $\Top_0$ means $f\leq g:SA\to SB$ in $\Pos$, or $f(x)\leq g(x)$ in $SB$ for all $x\in A$. We show that $S$ is strongly order-solid.

In fact, since the specialization {\em pre}order may be defined for all topological spaces, so that $S$ is the restriction of a preordered functor $S'$ as in the diagram
$$\xymatrix{{\Top_0}
\ar@{^(->}[r]^{}\ar[d]_{S}\ar@{}[rd]|{\rm{}} & \Top
\ar[d]^{S'}\\\Pos\ar@{^(->}[r]_{} & \Ord\;,}$$
and since $\Top_0$ is epireflective in $\Top$, so that the surjective reflection morphisms make the embedding order-right adjoint, by Proposition \ref{square} it suffices to show that $S'$ is strongly preorder-solid.

Indeed, given a preordered set $(X,\leq)$ and any family of monotone maps $\xi_i: SD_i\to (X,\leq)$ defined on topological spaces $D_i,\,i\in I,$ we obtain a topology $\tau$ on the set $X$ by declaring open all those down-closed sets $U\subseteq X$ for which the set $\xi_i^{-1}(U)$ is open in $D_i$, for every $i\in I$. Then, obviously, 
$\id_X:(X,\leq)\to S(X,\tau)$ is monotone and preorder-$S$-epic, and all maps $\xi_i:D_i\to (X,\tau)$ are continuous. 
When we are given any monotone map $f:(X,\leq)\to SB$ with a topological space $B$, such that all maps $f\cdot \xi_i: D_i\to B$ are continuous, then $f^{-1}(V)$ is down-closed for every open set $V$ of $B$ and indeed open in $(X,\tau)$, thus making  $f:(X,\tau)\to B$ continuous.

As a particular consequence of $S$ being strongly order-solid, with Theorem \ref{strong vs order} and  Proposition \ref{colimit lift} below one concludes that $\Top_0$ has all (small-indexed) weighted limits and colimits (as described in Section \ref{sec order solid}) since $\Pos$ has them (see Examples \ref{examples again}(1)), as previously observed in \cite{CS11} and \cite{ASV15}. Likewise for $\Top$.
\end{example}

\begin{example}\label{exa-frm}
Every {\em frame} (= complete lattice in which the binary meet distributes over arbitrary joins) $A$ has an underlying {\em meet-semilattice} $UA$ which just forgets the existence of arbitrary joins; likewise, one may forget the information that a homomorphism of frames preserves arbitrary joins and just keep the information of preservation of finite meets, to obtain a functor $$U:\Frm\to\SLat.$$ With the order in both categories inherited from $\Pos$, this functor is order-enriched and right adjoint as such: for a meet-semilattice $X$, the adjunction unit $\downarrow:X\to UDX$ into the lattice $DX$ of down-closed subsets of $X$ (ordered by $\subseteq$) assigns to $x\in X$ the principal down-set $\downarrow x=\{z\in X\,|\,z\leq x\}$ in $X$; it is easily seen to be order-$U$-epic since every down-closed subset of $X$ is a join of principal down-sets.

In order to show that $U$ is strongly order-solid, we consider a meet-semilattice $X$ and a family of homomorphisms $\xi_i:UC_i\to X$, with frames $C_i,\, i\in I$. On the frame $DX$, one lets $\sim$ be the least congruence relation such that
$$\forall i\in I,\, K\subseteq C_i \,\big{(}\downarrow\xi_i(\bigvee K)\,\sim\,\bigcup_{a\in K}\downarrow\xi_i(a)\,\big{)}.$$
It is clear that, with the projection $p:DX\to A:=DX/\!\sim$, all maps $p\mathrel{\cdot\downarrow \cdot}\xi_i:C_i\to A$ become frame homomorphisms. Furthermore, any meet-semilattice homomorphism $f:X\to UB$ to a frame $B$, for which all maps $f\cdot \xi_i:C_i\to B$ are frame homomorphisms, gives us a frame homomorphism $f^{\sharp}:DX\to B$ whose induced congruence relation must contain $\sim$. Consequently, $f^{\sharp}$ factors as $f^{\sharp}=t\cdot p$ with a frame homomorphism $t:A\to B$. Since $q:=p\mathrel{\cdot\downarrow }\, : X\to UA$ is clearly order-$U$-epic, this shows that $q$ belongs to a strongly order-universal $U$-extension of the family $\xi$, as desired.
$$\xymatrix{UC_i\ar[r]^{\xi_i}
&
X\ar[r]^{\downarrow}\ar[dr]_f\ar@<1ex>@/^15.pt/[rr]^q
&
UDX\ar[r]^{p}\ar@{-->}[d]^{f^{\sharp}}
&
UA\ar@{..>}[dl]^{t}\\
&&UB&}$$

The above construction raises the question of how to ``compute" the least congruence relation $C$ on a frame $A$ containing a given relation $R$ on $A$ --- even though an answer is actually not needed in the proof above. In any case, the reader may consult \cite{PP} to see that 
the underlying set of $A/C$ may be taken to contain all elements of $A$ that are saturated with respect to $R$, that is: every $s\in A$  such that, for all $a,b,c\in A$,  $a\,R\,b$ implies $(a\wedge c\leq s\; \Longleftrightarrow\; b\wedge c\leq s)$. In this way, $A/C$ becomes a frame, with the map $\pi:A\to A/C$ that assigns to $x\in A$ the infimum of  all saturated elements $s$ with $x\leq s$, acting as the quotient map; $\pi$ satisfies
the condition $(a\,R\,b \Rightarrow \pi(a)=\pi(b))$ and is universal with respect to it, that is: any frame homomorphism $g:A\to B$ with $(a\,R\,b \Rightarrow g(a)=g(b))$ factors as $g=h\cdot \pi$ with a frame homomorphism $h:A/R\to B$.
\end{example}

\begin{example}\label{exa-slat} 
That also the forgetful functor
$$V:\SLat\to\Pos$$
 is strongly order-solid may be shown analogously to the previous example. Its left adjoint $E$ is described as follows: for an ordered set $X$, one takes $EX$ to contain the up-closures $\uparrow\! F$ of all finite subsets $F\subseteq X$, ordered by reverse inclusion $\supseteq$. Since $(\uparrow\! F)\cup(\uparrow\! G)=\uparrow\!(F\cup G)$, this makes $EX$ a meet-semilattice and the map $\uparrow:X\to VEX,\,x\mapsto \uparrow\! x,$ monotone and, in fact, as one easily sees, the unit of an adjunction, since $\uparrow\! F=\bigcup_{x\in F}\uparrow\! x$, {\em i.e.}, every element in $EX$ is a finite meet of ``generic" elements.

Given  a family of monotone maps $\xi_i:VC_i\to X$ with meet-semilattices $C_i\,(i\in I)$, one considers the least congruence relation $\sim$ on $EX$ satisfying the condition
$$\forall i\in I,\, a,b\in C_i\,\Big{(}\uparrow(\xi_i(a\wedge b))\sim\,\uparrow\xi_i(a)\cup\uparrow\xi_i(b)\;{\rm{and}}\;\uparrow\xi_i(\top_i)\sim\,\emptyset\,\Big{)},$$
where $\top_i$ denotes the top element in $C_i$. By definition of $\sim$, with the projection $p:EX\to A= EX/\!\sim$, one obtains meet-semilattice homomorphisms $p\mathrel{\cdot\uparrow\cdot}\xi_i:C_i\to A$ for all $i\in I$. Since $\sim$ is contained in the congruence relation induced by the canonical extension $f^{\sharp}:EX\to B$ of any monotone map $f:X\to VB$ to a meet-semilattice $B$ making all $f\cdot\xi_i$ homomorphisms, $f$ factors uniquely through $p\mathrel{\cdot\uparrow }\,: X\to VA$. That this map is order-$V$-epic follows again from the presentation $\uparrow\!F=\bigcup_{x\in F}\uparrow\!x$ of elements in $EX$.
\end{example}

As in Example \ref{exa-top0}, from Examples \ref{exa-frm} and  \ref{exa-slat} we can draw the conclusion that $\SLat$ and $\Frm$ have all (small-indexed) weighted (co)limits.

\begin{example}\label{exa-ordmon}
By an {\em ordered Abelian monoid} $A$ we understand a commutative monoid object in the category $\Pos$, that is: $A$ is a commutative monoid equipped with a partial order that makes its binary operation $+:A\times A\to A$ monotone. The morphisms of the resulting category ${\sf AbMon}(\Pos)$ are monotone monoid homomorphisms. With the order of the hom-sets of ${\sf AbMon}(\Pos)$ inherited from $\Pos$, we want to show that the forgetful functor
$W:{\sf AbMon}(\Pos)\to\Pos$
is strongly order-solid. For that, in consideration of the commutative diagram
$$\xymatrix{{\sf AbMon(\Pos)}
\ar@{^(->}[r]^{}\ar[d]_{W}\ar@{}[rd]|{\rm{}} & {{\sf AbMon}(\Ord)}
\ar[d]^{W'}\\\Pos\ar@{^(->}[r]_{} & \Ord\;,}$$
by Proposition \ref{square} it suffices to show that the forgetful functor $W':{\sf AbMon}(\Ord)\to\Ord$ of {\em preordered Abelian monoids} (which, in comparison to ordered Abelian monoids, are missing only the anti-symmetry) is strongly preorder-solid, and that the top-row full inclusion functor is preorder-right adjoint.  But the latter fact is easily guaranteed by General-Adjoint-Functor-Theorem-type arguments (see, for example, \cite{AHS, HST2014}), since ${\sf AbMon}(\Pos)$ is closed under point-separating families in ${\sf AbMon}(\Ord)$, so that we can focus on the former and first show that $W'$ is preorder-right adjoint.

To this end, since we are not aware of a proof presented in the specific situation considered here (see \cite{Fritz2017} and the literature cited in there), we rely on general principles to confirm that $W'$,
as an ordinary functor, is right adjoint, and apply the construction provided by Wyler's {\em Taut Lift Theorem}  \cite{Wyler71a}. Hence, for a preordered set $X$, we consider all monotone maps $f:X\to A_f$ whose codomain is any preordered Abelian monoid, and denote by $f^{\sharp}:FX\to A_f$ the homomorphism that extends $f$ to the free Abelian monoid $FX$ over the set $X$; it consists of all formal sums $\sum_{x\in X}n_xx$ (with non-negative integers $n_x$, all but finitely many being $0$), and $f^{\sharp}$ sends them to $\sum_{x\in X}n_xf(x)$. With
$$a\leq b:\iff \forall f:X\to A_f\,\big{(}f^{\sharp}(a)\leq f^{\sharp}(b)\,\big{)},$$
it is easy to see that $FX$ becomes a preordered Abelian monoid, making the insertion $\delta_X:X\to W'FX$ a $W'$-universal arrow, which turns out to be also order-$W'$-epic.

To finally see that $W'$ is strongly preorder-solid, given a family $\xi=(\xi_i:W'C_i\to X)_{i\in I}$ of monotone maps from preordered Abelian monoids $C_i\, (i\in I)$ to a preordered set $X$, we consider the least monoid congruence relation 
$\sim$ on $FX$ which, with the projection $p:FX\to FX/\!\sim$, makes all maps $p\cdot\delta_X\cdot\xi_i$ monoid homomorphisms. 
We must now define a preorder on $FX/\!\sim$, in such a way that $FX/\!\sim$ becomes a preordered Abelian monoid with monotone projection $p$. To this end, let us call a monotone map $f:X\to A_f$ $\xi$-admissible if $f\cdot \xi_i:C_i\to A_f$ is a monotone homomorphism for all $i\in I$ and then define, for all $a,b\in FX$,
$$p(a)\leq p(b):\iff \forall f:X\to A_f\;\, \xi{\text{-admissible}}\,\big{(}f^{\sharp}(a)\leq f^{\sharp}(b)\,\big{)}.$$
Since $\sim$ is generated by the pairs $(\delta_X(\xi_i(c+d)),\delta_X(\xi_i(c))+\delta_X(\xi_i(d))),\; c,d\in C_i,\,i\in I,$ one sees that this preorder is well defined and has the desired properties.

\end{example}

Categories of ordered algebras, of which ${\sf AbMon}(\Pos)$ is an example, have gained the attention of several authors; see, for instance, \cite{KV} and the references given there. Hence, in what follows, we extend the
previous example and consider any variety of any (possibly infinitary) type of general algebras instead of Abelian monoids. These are sets that come equipped with a class of (possibly infinitary) operations (instead of one binary and one nullary operation for monoids), which are required to satisfy certain equations (instead of the associativity, neutrality and commutativity requirements). Moreover, we must assume that one can form the free (pre)ordered general algebra of that type over a (pre)ordered set, with the insertion of generators being order-$(-)^1$-epic; here, as we explain next, $(-)^1$ denotes the forgetful functor from the category of (pre)ordered general algebras of the given type and their monotone homomorphisms to the category $\Pos$ (or $\Ord$). 

In the following theorem
we formulate these facts in terms of Lawvere-Linton (infinitary) algebraic theories (as originally introduced in \cite{Lawvere1963, Linton1966}; for a modern treatment in the finitary case, see \cite{ARV2011}). Explicitly then, paraphrasing \cite{Linton1966} in the spirit of \cite{ARV2011}, by an {\em (infinitary) algebraic theory} $\ct$ we mean a category whose class of objects is the class of cardinal numbers, such that every cardinal $n$ is the $n$-fold power of $1$ in $\ct$. An {\em ordered $\ct$-algebra} $A$ is a product-preserving functor $A:\ct\to\Pos$; its underlying ordered set is the value of $A$ at $1$.  When we denote the value of $A$ at $n$ more suggestively by $A^n$, then $A$ assigns to every $n$-ary term $t$ of $\ct$, {\em i.e.,} to every morphism $t:n\to 1$ in $\ct$, an $n$-ary monotone operation $At: A^n\to A^1$, written more conveniently as $t_A$.\footnote{For example, the morphisms $t:n\to m$ of the theory $\ct$ of Abelian monoids may be taken to be the homomorphisms $t:Fm\to Fn$ of the free Abelian monoids on $m$ and $n$ generators. Hence, for $m=1$, $t$ just picks an element in $Fn$, that is: an $n$-ary formal term, to which an algebra $A$ as defined here assigns the actual $n$-ary operation $t_A$ on its underlying set. Likewise for any other general algebraic structures admitting free algebras.}
A {\em monotone $\ct$-homomorphism} $f:A\to B$ of ordered $\ct$-algebras is simply a natural transformation; its underlying monotone map is the component of the transformation at 1, which must commute with the $n$-ary operations $t$; that is,  when we write the underlying map of $f$ as $f$ again,
 $f\cdot t_A=t_B\cdot f^n$. With the order on its hom-sets inherited from $\Pos$, this defines the ordered category ${\sf Alg}(\ct,\Pos)$, as a full subcategory of the ordered functor category $\Pos^{\ct}$. By replacing $\Pos$ by $\Ord$ one obtains the category of {\em preordered $\ct$-algebras} and the commutative diagram
$$\xymatrix{{\sf Alg(\ct,\Pos)}
\ar@{^(->}[r]^{}\ar[d]_{(-)^1}\ar@{}[rd]|{\rm{}} & {{\sf Alg}(\ct,\Ord)}
\ar[d]^{(-)^1}\\\Pos\ar@{^(->}[r]_{} & \Ord}$$ 
of preordered functors. We call $\ct$ {\em preorder-varietal} if the functor $(-)^1:{\sf Alg(\ct,\Ord)}\to\Ord$ is preorder-right adjoint and obtain, as in Example \ref{exa-ordmon}, the following quite general result:

\begin{theorem}\label{order varietal}
For every preorder varietal algebraic theory $\ct$, the forgetful functor
$$U^{\ct}:{\sf Alg}(\ct,\Pos)\to\Pos,\,A\mapsto A^1,$$
is strongly order-solid, and likewise when $\Pos$ is traded for $\Ord$.
\end{theorem}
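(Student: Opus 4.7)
The plan is to follow the template of Example \ref{exa-ordmon}, with ${\sf AbMon}$ replaced by an arbitrary preorder-varietal theory $\ct$. By Proposition \ref{square} applied to the displayed commutative square, it suffices to check two things: that the top horizontal full inclusion ${\sf Alg}(\ct,\Pos)\hookrightarrow{\sf Alg}(\ct,\Ord)$ is preorder-right adjoint, and that the right-hand vertical functor $(-)^1:{\sf Alg}(\ct,\Ord)\to\Ord$ is strongly preorder-solid. The first is essentially routine: the reflector into ${\sf Alg}(\ct,\Pos)$ sends a preordered $\ct$-algebra $A$ to its anti-symmetrization $A/\!\equiv$ by the equivalence $a\equiv b\iff a\leq b\leq a$, which is a $\ct$-congruence since all operations $t_A$ are monotone, and the surjective projection is monotone and preorder-$(-)^1$-epic. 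Alternatively, as indicated in Example \ref{exa-ordmon}, a General-Adjoint-Functor-Theorem argument applies, since ${\sf Alg}(\ct,\Pos)$ is closed under point-separating cones in ${\sf Alg}(\ct,\Ord)$.

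The bulk of the work is the second claim. Fix a family $D=(C_i)_{i\in I}$ of preordered $\ct$-algebras and monotone maps $\xi_i:C_i^1\to X$. By preorder-varietality the free-algebra unit $\delta_X:X\to (FX)^1$ of $F\dashv(-)^1$ is monotone and preorder-$(-)^1$-epic. Generalizing the AbMon construction, I would let $\sim$ be the smallest $\ct$-congruence on $FX$ for which, with projection $p:FX\to A:=FX/\!\sim$, each composite $p\cdot\delta_X\cdot\xi_i$ is a $\ct$-homomorphism; explicitly, $\sim$ is generated by the pairs $\bigl(\delta_X(\xi_i(t_{C_i}(c))),\,t_{FX}(\delta_X\cdot\xi_i\cdot c)\bigr)$ as $t:n\to 1$ ranges over the terms of $\ct$, $c:n\to C_i$ over $n$-tuples in $C_i$, and $i$ over $I$. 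Call a monotone $f:X\to A_f$ into a preordered $\ct$-algebra \emph{$\xi$-admissible} if each $f\cdot\xi_i$ is a monotone $\ct$-homomorphism, and write $f^\sharp:FX\to A_f$ for its unique homomorphic extension along $\delta_X$. I would then equip $A$ with the preorder $p(a)\leq p(b)\iff f^\sharp(a)\leq f^\sharp(b)$ for every $\xi$-admissible $f$. Standard checks give that this preorder is well defined, that the $\ct$-operations on $A$ are monotone so $A\in{\sf Alg}(\ct,\Ord)$, that $q:=p\cdot\delta_X$ is monotone with each $q\cdot\xi_i$ a monotone $\ct$-homomorphism, and that any $\xi$-admissible $f$ factors through $q$ via a unique monotone $\ct$-homomorphism $t_f:A\to A_f$ with $t_f\cdot p=f^\sharp$, monotonicity of $t_f$ being built into the definition of the preorder on $A$.

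The step I expect to be the most delicate is verifying that $q$ is preorder-$(-)^1$-epic, since this is where the preorder-varietality hypothesis must be fully leveraged. Given monotone $\ct$-homomorphisms $r,s:A\to B$ with $r\cdot q\leq s\cdot q$, apply preorder-$(-)^1$-epicness of $\delta_X$ to the $\ct$-homomorphisms $r\cdot p$ and $s\cdot p$ out of $FX$: from $(r\cdot p)\cdot\delta_X=r\cdot q\leq s\cdot q=(s\cdot p)\cdot\delta_X$ one obtains $r\cdot p\leq s\cdot p$, and surjectivity of $p$ then yields $r\leq s$. This exhibits $(p\cdot\delta_X\cdot\xi,A,q)$ as a strongly preorder-universal $(-)^1$-extension of $\xi$, completing the proof for $\Ord$; the $\Pos$-version follows by Proposition \ref{square}.
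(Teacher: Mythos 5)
Your proposal is correct and follows exactly the route the paper intends: the paper forgoes a written proof, stating that it "follows the same argumentation as that of Example \ref{exa-ordmon}" (alternatively appearing as the special case $\cs$ initial of Theorem \ref{alg fun solid}), and your argument is precisely that generalization — Proposition \ref{square} applied to the $\Pos$/$\Ord$ square, the free algebra $F X$ supplied by preorder-varietality, the least congruence making the $p\cdot\delta_X\cdot\xi_i$ homomorphisms, and the preorder on the quotient defined via $\xi$-admissible maps. The concluding deduction of preorder-$(-)^1$-epicness of $q$ from that of $\delta_X$ plus surjectivity of $p$ is sound.
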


We forgo the proof of the theorem, not only since it follows the same argumentation as that of Example \ref{exa-ordmon}, but also since the theorem is a special case of Theorem \ref{alg fun solid}, the proof of which we sketch in sufficient detail, albeit with a variation which avoids the use of Wyler's Theorem.

 While  Theorem \ref{order varietal} covers Examples \ref{exa-slat} and \ref{exa-ordmon}, a generalization of Example \ref{exa-frm} requires the consideration of {\em algebraic functors}, induced by morphisms of algebraic theories. Recall that a {\em morphism $K:\cs\to\ct$ of algebraic theories} $\cs,\ct$ is simply a functor that maps objects identically and preserves their status as direct products. For example, the embedding of the theory of meet-semilattices into the theory of frames is a morphism of algebraic theories. Any morphism $K$ of algebraic theories gives rise to the {\em ordered algebraic functor}
$${\sf Alg}(\ct,\Pos)\to{\sf Alg}(\cs,\Pos),\,A\mapsto AK,$$
which, for convenience, we denote by $K$ again. In the example just mentioned, this then is the forgetful functor $\Frm\to\SLat$ as considered in Example \ref{exa-frm}.

\begin{theorem}\label{alg fun solid}
The (pre)ordered algebraic functor induced by any morphism of preorder-varietal algebraic theories is strongly (pre)order-solid.
\end{theorem}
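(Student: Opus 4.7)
The plan is to reduce the theorem to its preorder-enriched version via Proposition \ref{square}. For both $\cs$ and $\ct$, the full inclusion of ordered algebras into preordered algebras admits a left adjoint --- the anti-symmetry reflection, which quotients each preordered algebra by $a\sim b:\iff a\leq b\leq a$ --- whose units are surjective and hence preorder-$(-)^1$-epic, so these inclusions are preorder-right adjoint. Since $K$ acts by precomposition on both rows, the resulting square commutes, and Proposition \ref{square} reduces the problem to showing that $K:{\sf Alg}(\ct,\Ord)\to{\sf Alg}(\cs,\Ord)$ is strongly preorder-solid.

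To establish this preordered statement I would directly construct a strongly preorder-universal $K$-extension, following the scheme of Examples \ref{exa-frm}--\ref{exa-ordmon}. Given $D=(D_i)_{i\in I}$ in ${\sf Alg}(\ct,\Ord)$ and $\xi=(\xi_i:KD_i\to X)_{i\in I}$ in ${\sf Alg}(\cs,\Ord)$, preorder-varietality of $\ct$ supplies the free preordered $\ct$-algebra $F$ over the underlying preordered set of $X$, with unit $\delta_X$. Let $\sim$ be the least $\ct$-congruence on $F$ generated by the pairs $\delta_X(s_X(x))\sim(Ks)_F(\delta_X\circ x)$, for each $\cs$-operation $s$ and matching tuple $x$ in $X$, together with $\delta_X(\xi_i(t_{D_i}(d)))\sim t_F(\delta_X\circ\xi_i\circ d)$, for each $i\in I$, each $\ct$-operation $t$, and each matching tuple $d$ in $D_i$. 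With projection $p:F\to A:=F/{\sim}$, the first family of generators makes $p\cdot\delta_X$ underlie a $\cs$-homomorphism $q:X\to KA$, while the second makes each $q\cdot\xi_i$ underlie a $\ct$-homomorphism $\alpha_i:D_i\to A$. To equip $A$ with a preorder turning it into a preordered $\ct$-algebra, I mimic Example \ref{exa-ordmon}: a monotone $\cs$-homomorphism $f:X\to KB$ into a preordered $\ct$-algebra is called \emph{$\xi$-admissible} when each $f\cdot\xi_i$ lifts to a monotone $\ct$-homomorphism $D_i\to B$, and I define $p(a)\leq p(b)$ iff $f^\sharp(a)\leq f^\sharp(b)$ for every $\xi$-admissible $f$, where $f^\sharp:F\to B$ is the $\ct$-morphism extending the underlying monotone map of $f$ (which factors through $\sim$ by the very choice of $\sim$). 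Universality of $(\alpha,A,q)$ then follows immediately from the universal properties of $F$ and $\sim$.

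The main obstacle is the set-theoretic management of $\sim$ when $\ct$ is infinitary: one must verify that the conjunction of any family of $\ct$-congruences on $F$ is again a $\ct$-congruence, so that $\sim$ exists as the intersection of all $\ct$-congruences containing the (small) generating set above. A secondary subtlety is the preorder-$K$-epicity of $q$; for this I would exploit that the preorder-enriched adjunction between $F_\ct$ and the underlying-preordered-set functor $(-)^1_\ct$ delivers an order-isomorphism of hom-sets, which promotes any pointwise inequality $Kr\cdot q\leq Ks\cdot q$ on $X$ (for $\ct$-morphisms $r,s:A\to B'$) first to $r\cdot p\leq s\cdot p$ on $F$, and then, via surjectivity of $p$, to $r\leq s$ on $A$.
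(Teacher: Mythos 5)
Your proposal is correct, and its overall architecture --- reduction to the preordered case via Proposition \ref{square}, a quotient of a free algebra by a least congruence, and a preorder on the quotient defined through $\xi$-admissible maps --- coincides with the paper's. Where you genuinely diverge is in the middle step. The paper first establishes that the preordered algebraic functor $K'$ is preorder-right adjoint by invoking Dubuc's Adjoint Triangle Theorem to manufacture $L\dashv K'$ from $F^{\ct}\dashv U^{\ct}$ and $F^{\cs}\dashv U^{\cs}$, and then verifies preorder-epicity of the unit $\kappa_B$ via the mate $\mu$ and the regular epimorphism $\pi$; only afterwards does it quotient $LB$ by the congruence forcing the $\xi_i$ to become $\ct$-homomorphisms. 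You instead work directly with the free preordered $\ct$-algebra $F$ on the underlying preordered set of $X$ and fold both families of relations --- those making $\delta_X$ underlie an $\cs$-homomorphism (which alone would reconstruct $LX$) and those making the $\xi_i$ into $\ct$-homomorphisms --- into a single congruence. This buys a more elementary argument: preorder-$K$-epicity of $q=p\cdot\delta_X$ falls out of preorder-varietality of $\ct$ (which makes $\delta_X$ preorder-$U^{\ct}$-epic) together with surjectivity of $p$, with no mate calculus needed; the price is that the adjunction $L\dashv K'$ remains implicit (it is the case $I=\emptyset$ of your construction). The obstacles you flag are real but harmless: even when $I$ is a proper class the generating pairs form a subclass of the set $F\times F$, and arbitrary intersections of $\ct$-congruences are $\ct$-congruences also for infinitary operations, so the least congruence exists; and one should, as the paper likewise leaves implicit, check that the operations of $F/{\sim}$ and the map $q$ are monotone for the admissible-map preorder --- both are immediate since each $f^\sharp$ is a monotone homomorphism.
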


\begin{proof} (Sketch) As in Example \ref{exa-ordmon}, by Proposition \ref{square} it suffices that the (analogously defined) preordered functor $K'$ of the commutative diagram
$$\xymatrix{{\sf Alg(\ct,\Pos)}
\ar@{^(->}[r]^{}\ar[d]_{K}\ar@{}[rd]|{\rm{}} & {{\sf Alg}(\ct,\Ord)}
\ar[d]^{K'}\\{{\sf Alg}(\cs,\Pos)}\ar@{^(->}[r]_{} & {{\sf Alg}(\cs,\Ord)}}$$
is strongly preorder-solid.
With the notation for ordered algebras used also in the preordered case, the algebraic functor $K'$ commutes with the forgetful functors of the algebraic categories, that is: $U^{\cs}K'=U^{\ct}$. Assuming that both $U^{\ct}$ and $U^{\cs}$ are preorder-right adjoint, we first show that $K'$ is also preorder-right adjoint. To this end, we note that, according to Dubuc's {\em Adjoint Triangle Theorem} \cite{Dubuc}, the left adjoint $L$ of the ordinary functor $K'$ may be constructed with the help of the left adjoints $F^{\ct}\dashv U^{\ct},\;F^{\cs}\dashv U^{\cs}$ and their adjunction units $\eta$ and co-units $\varepsilon$.  An inspection of the proof of Dubuc's Theorem reveals that the unit $\kappa:1\to K'L$ of $L\dashv K'$ makes the diagram
$$\xymatrix{F^{\cs}U^{\cs}
\ar[r]^{\varepsilon^{\cs}}\ar[d]_{\mu U^{\cs}}\ar@{}[rd]|{\rm{}} & 1
\ar[d]^{\kappa}\\K'F^{\ct}U^{\cs}\ar[r]_{K'\pi} & K'L}$$
commute; here $\mu:F^{\cs}\to K'F^{\ct}$ is the mate of $\eta^{\ct}:1\to U^{\cs}(K'F^{\ct})=U^{\ct}F^{\ct}$, and $\pi: F^{\ct}U^{\cs}\to L$ is a (pointwise) regular epimorphism in ${\sf Alg}(\ct,\Ord)$. Now we can easily see that for every preordered $\cs$-algebra $B$, the unit $\kappa_B$ is preorder-$K'$-epic. Indeed, for $r,s: LB\to A$ in ${\sf Alg}(\ct,\Pos)$ with $K'r\cdot\kappa_B\leq K's\cdot\kappa_B$, the commutativity of the diagram gives $K'(r\cdot\pi_{B^1})\cdot\mu_{B^1}\leq K'(s\cdot\pi_{B^1})\cdot\mu_{B^1}$, where $B^1=U^{\cs}B$. But the mate  $\mu_{B^1}$
 of $\eta^{\ct}_{B^1}$ satisfies $U^{\cs}\mu_{B^1}\cdot\eta_{B^1}^{\cs}=\eta_{B^1}^{\ct}$, so that with $U^{\cs}K'=U^{\ct}$ we obtain
$$U^{\ct}(r\cdot\pi_{B^1})\cdot\eta_{B^1}^{\ct}\leq U^{\ct}(s\cdot\pi_{B^1})\cdot\eta_{B^1}^{\ct}.$$
Since $\eta_{B^1}^{\ct}$ is preorder-$U^{\ct}$-epic and $\pi_{B^1}$ surjective, $r\leq s$ follows.

Let $\xi=(\xi_i:K'C_i\to B)_{i\in I}$ be a family of monotone $\cs$-homomorphisms
from preordered $\ct$-algebras $C_i\,(i\in I)$ to a preordered $\cs$-algebra $B$. Largely neglecting to write down forgetful functors now, on the $\ct$-algebra $LB$ we consider the least congruence relation $\sim$ 
which makes the $\cs$-homomorphisms $p\cdot\kappa_B\cdot\xi_i$ \;$\ct$-homomorphisms, where $p:LB\to LB/\!\sim$ is the projection map. As in Example \ref{exa-ordmon}, we equip $LB/\!\sim$ with the preorder defined by
$$p(a)\leq p(b):\iff \forall f:B\to A_f\;\, \xi{\text{-admissible}}\,\big{(}f^{\sharp}(a)\leq f^{\sharp}(b)\,\big{)},$$
for all $a,b\in LB$; here $f$ runs through all monotone $\cs$-homomorphisms into some preordered $\ct$-algebra, and the $\xi$-admissibility of $f$ means that all maps $f\cdot\xi_i:C_i\to A$ need to be monotone $\ct$-homomorphisms; $f^{\sharp}:LB\to A$ denotes the $\ct$-homomorphism with $f^{\sharp}\cdot\kappa_B=f$. This makes $LB/\!\sim$ an object of ${\sf Alg}(\ct,\Ord)$ and $p\cdot\kappa_B$ belong to a preorder-universal $K$-extension of $\xi$.
\end{proof}

Theorem \ref{order varietal} appears as a special case of Theorem \ref{alg fun solid} when one chooses for $\cs$ the initial algebraic theory, given by the dual of the full subcategory of $\Set$ with object class all cardinal numbers, {\em i.e.} a skeleton of $\Set$.

We continue with an important example of a strongly order-solid functor of a category of a generalized type of ordered algebras which, however, is not covered by Theorem \ref{alg fun solid}, since only some of the algebraic operations are assumed to be monotone and, more importantly, since the order of the homomorphisms is not taken to be given pointwise by universal quantification over {\em all} elements of their common domain, but only over a part of it.

\begin{example}\label{OVec generating}
By an {\em ordered vector space} $V$ we understand a real vector space that comes equipped with a partial order for which the (binary) addition and all unary operations given by multiplication with any non-negative scalar $\lambda$ are monotone. Such $V$ defines the {\em positive cone} $PV=\{v\in V\,|\,v\geq 0\}$, and a linear map $f:V\to W$ is said to be {\em positive} 
if it maps $PV$ into $PW$; equivalently: if $f:V\to W$ is monotone. Given another positive linear map $g:V\to W$, one writes
$$f\leq g:\iff\forall v\geq0\;(f(v)\leq g(v)).$$
But to make sure that this preorder is anti-symmetric, we must assume that the positive cone $PV$ is {\em generating}, that is: $V=PV+(-PV)$. Hence,
we denote by $\OVec$ the category of all  ordered vector spaces $V$ whose positive cone is generating, and their positive linear maps. 
We obtain the ordered functor
$P:\OVec\to \Pos,$
and claim that $P$ is {\em strongly order-solid}.

As in Example \ref{exa-ordmon}, we use Proposition \ref{square}. We note that it suffices to show that the analogously defined functor $P'$ of {\em preordered vector spaces} with generating positive cones, which extends $P$ as in the diagram
$$\xymatrix{\OVec
\ar@{^(->}[r]^{}\ar[d]_{P}\ar@{}[rd]|{\rm{}} & \POVec
\ar[d]^{P'}\\\Pos\ar@{^(->}[r]_{} & \Ord\;,}$$
is strongly preorder-solid, since it is easy to see that any such preordered vector space $V$ admits a surjective reflection into $\OVec$:  just consider $V/U$, where $U=\{v\in V\, |\, v\leq 0\leq v\}$.

Proving first that $P'$ is preorder-right adjoint,
given a preordered set $X$, one extends its preorder $\leq$ and considers the least preorder $\leq$ of the free real vector space $FX$ with basis $X$ satisfying
\begin{itemize}
\item[1.] if $x\leq y$ in $X$, then $0\leq x\leq y$ in $FX$;
\item[2.] if $u\leq v$ in $FX$ and $w\in FX,\,\lambda\geq 0$, then $\lambda u+w\leq \lambda v+w$ in $FX$. 
\end{itemize}
In this way the positive cone of $FX$ becomes generating and the insertion $\eta_X:X\hookrightarrow FX$ a $P'$-universal arrow, which is also preorder-$P'$-epimorphic. 
Now, given a family of monotone maps $\xi_i:P'V_i\to X$ with preordered vector spaces $V_i,\,i\in I$, we consider all the vector space quotients  $q:FX\to FX/K_q$, where the preorder of $FX/K_q$ is such that it makes the quotient a preordered vector space, $q$ a positive map and all $g_i=q\ot \eta_X\ot \xi_i$  {\em positively linear}, so that $g_i(\lambda u+\mu w)=\lambda g_i(u) +\mu g_i(w)$ for all $v,\, w \in PV_i$ and $\lambda,\, \mu \geq 0$. For $K$ the intersection of all the subspaces $K_q$, the vector space $FX/K$ comes then equipped with the  preorder  given by
$$v+K\leq w+K\; :\iff \;\forall q\; (v+K_q\leq w+K_q).$$
This way we obtain a quotient $p:FX\to FX/K$ and monotone maps $p\ot \eta_X \ot \xi_i$. Since for each $i$, $PV_i$ is  generating in $V_i$, every $p\ot \eta_X \ot \xi_i$ has a unique linear extension $\alpha_i:V_i\to FX/K$ (because we may obtain a base contained in the positive cone).  The monotone map $p\ot \eta_X$ together with the family $\alpha$  forms the desired strongly preorder-universal $P'$-extension of $\xi$.
 We leave all details to the reader and refer to the literature, such as \cite{Jameson} or \cite{Schaefer}.
\end{example}

In the next example, we consider ordered vector spaces again, but now take the order of the hom-sets to be given by the pointwise order over the entire vector space, not just over the positive cone. Then the order becomes necessarily discrete  and, although we still obtain an order-faithful forgetful functor to $\Pos$ that is solid in the ordinary sense, it fails to be strongly order-solid. This shows in particular that, in Theorem \ref{thm characterization}, we cannot omit condition (b).

\begin{example}\label{OVec}Let $\OVec_{=}$ be the category of ordered real vector spaces and linear maps which preserve the order.  Given a pair of morphisms $f,g:V\to U$, since the inequality $f(u)\leq g(u)$ implies $g(-u)\leq f(-u)$, imposing the inequality $f(v)\leq g(v)$ for {\em all} $v\in V$ forces $f=g$. Hence, $\OVec_{=}$ is trivially order-enriched via the discrete order,  and the forgetful ordered functor
$$R:\OVec_{=}\to \Pos$$
is order-faithful. We show that $R$ is also solid in the ordinary sense but fails to be strongly order-solid.

In order to show that $R$ is solid, we can follow a path completely analogous to the one used in the previous example when we showed that $P$ is strongly order-solid, including  the use of Proposition \ref{square} in its non-enriched version (that is, with the discrete order between morphisms). Here, to show that the forgetful functor into $\Ord$ is a right-adjoint, we change condition 1 of the description of the preorder on the freely generated vector space $FX$ of Example \ref{OVec generating}, by replacing $0\leq x\leq y$ with $x\leq y$.

 As a solid ordinary functor, $R$ has a left adjoint $H$, but the adjunction units $X\hookrightarrow RHX$ will generally fail to be order-$R$-epic. Indeed for $X$ the 2-chain $\{a<b\}$, let the real valued maps $f^{\sharp}, \,g^{\sharp}:HX\to \mathbb{R}$ be determined by  the monotone maps $f,g:X\to \mathbb{R}$ with $0=f(a)< g(a)=f(b)=2<g(b)=3$. Then, for $u=b-a$, $f^{\sharp}(u)\not\leq g^{\sharp}(u)$. Consequently $R$ is not strongly order-solid.
 
In conclusion, the functor $R$  fulfils conditions (a) and (c) of  Theorem \ref{thm characterization}, that is, $R$  is ordinarily solid and order-faithful, but does not fulfil (b), since $R$  fails  to preserve inserters. Indeed, in  $\OVec_{=}$ inserters are just equalizers, since the order between morphisms is discrete, but not so in $\Pos$.

The above arguments also show that, analogously, we have a preorder-faithful functor which is solid but not strongly preorder-solid, and which does not preserve inserters.
\end{example}

\section{Order-enriched solid functors} \label{sec order solid}

Following Anghel's lead \cite{Anghel, Anghel1} we now look at notions of universal $P$-extension and solidity for order-enriched functors from the general enriched categorical perspective.
An {\em ordered diagram (of shape $\cd$)} in an ordered category $\ca$ is an ordered functor $D:\cd\to\ca$; we do not restrict the size of the ordered category $\cd$. For a given $\cd$ and an object $A$ in $\ca$, when there is no risk of confusion we denote the constant functor $\cd\to\ca$ with value $A$ again by $A$; a morphism $t:A\to B$ is then treated as a natural transformation of constant functors. A {\em weight} for an ordered diagram of shape $\cd$ is an ordered functor $W:\cd^{\rm op}\to\Ord$. (Note that, in forming $\cd^{\rm op}$, one turns around the arrows of $\cd$ while maintaining their order.) Every object $B$ in $\ca$ gives the weight 
$$\ca(D-,B)=\big(\cd^{\op}\xrightarrow{D^{\op}}\ca^{\op}\xrightarrow{\ca(-,B)}\Pos\big),\, i\mapsto \ca(Di,B),$$
and an $\ca$-morphism $t:A\to B$ then becomes a natural transformation $\ca(D-,t):\ca(D-,A)\to\ca(D-,B)$, {\em i.e.}, a morphism in the (potentially very large) ordered category $\Pos^{\cd^{\op}}$ of weights for $\cd$, the morphisms of which are ordered componentwise.
Pushing things even further, we note that, of course, $\ca(D-,B)$ is functorial in $B$, {\em i.e.}, one has the hom-functor 
$$H_D^W: \ca\to \Posl,\; B\mapsto \Pos^{\cd^{\op}}(W,\ca(D-,B)),$$
whose codomain may be very large\footnote{The objects of $\Posl$ are the partially ordered classes. Concerning the informal term ``very large" and the formation of $\Posl$, the same comment as the one made before Definition \ref{def strongly solid} (as footnote 5) applies here.}.

To fix our notation and terminology, we recall the notion of weighted colimit in both,  elementary "pointwise" form and standard terms of enriched category theory, before proceeding similarly for the enriched notion of order-solidity.

\begin{remarks}\label{def weighted colim}
 (1) A {\em weighted cocone} $\alpha:D\to A$ over an ordered diagram $D:\cd\to\ca$ of weight $W:\cd^{\op}\to\Pos$ (briefly referred to as a $W$-weighted cocone over $D$) is given by its {\em vertex} $A\in{\rm ob}\ca$ and a natural transformation $\alpha: W\to \ca(D-,A)$, that is: a family of $\ca$-morphisms
$\alpha_i^u:Di\to A\quad(i\in{\rm ob}\cd,\,u\in Wi),$ satisfying the conditions
\begin{itemize}
\item $\forall\, i\in{\rm ob}\cd,\, u,v\in Wi\; (\,u\leq v\Longrightarrow \alpha_i^u\leq\alpha_i^v\,)$;
\item $\forall\, d:i\to j$ in $\cd,\, v\in Wj \;(\,\alpha_j^v\cdot Dd=\alpha_i^{Wd(v)}\,)$.
\end{itemize}
$\cd$ is the {\em shape} of the cocone.

(2) A $W$-{\em weighted colimit of} $D$ is given by a $W$-weighted cocone $\alpha:D\to A$ such that 
\begin{itemize}
\item $\alpha: D\to A$ is universal amongst all $W$-weighted cocones $\beta:D\to B$, {\em i.e.}, any such $\beta$ factors through $\alpha$, so that there is a unique\footnote{Uniqueness is automatically guaranteed by the subsequent condition of $\alpha$ being order-epic.} $\ca$-morphism $t:A\to B$ with $\beta= t\cdot \alpha$; that is,\\
$\forall\, i\in{\rm ob}\cd,\, u\in Wi\;(\beta_i^u=t\cdot\alpha_i^u\,);$
\item $\alpha: D \to A$ is {\em order-epic}, so that for all $r,s:A\to B$ in $\ca$ one has the implication\\
$\forall\, i\in{\rm ob}\cd,\, u\in Wi\;(r\cdot\alpha_i^u\leq s\cdot\alpha_i^u)\Longrightarrow r\leq s;$\\\
we write more economically $(r\cdot\alpha\leq s\cdot\alpha\Longrightarrow r\leq s)$ for this implication.
\end{itemize}
It is easy to check that this equivalently means that $H_D^W$ is representable, {\em i.e.}, $H_D^W\cong \ca(A,-)$ as $\Posl$-valued functors, making $H_D^W$ in effect $\Pos$-valued.

(4) A weighted cocone $\alpha: D\to A$ whose weight $W= {\sf 1}:\cd^{\rm op}\to\Pos$ maps the $\cd$-objects constantly to the terminal ordered set $\sf 1$ is simply an ordinary cocone over the ordinary diagram $D_o$. Consequently, a $\sf 1$-weighted cocone $\alpha:D\to A$ is a weighted colimit precisely when it is an ordinary colimit of $D_o$ and order-epic. Such weighted colimits are usually called {\em conical}.

(5) A weighted cocone in $\ca$ over an empty diagram is just an object of $\ca$. By (4), a weighted colimit over the empty diagram is just an ordinary colimit, {\em i.e.,} an initial object of the category $\ca$. 

(6) A diagram $D$ in $\ca$ over the terminal (ordered) category $\cd=\sf 1$ can be viewed as an object $D$ of $\ca$; likewise, a weight $W$ with domain $\sf 1$ is to be considered as an ordered set $W$. A $W$-weighted cocone
with vertex $A\in{\rm ob}\ca$ is then a monotonely $W$-indexed family of morphisms $\alpha^u:D\to A$ in the ordered category $\ca$, so that $\alpha^u\leq\alpha^v$ whenever $u\leq v$ in $W$. If it is even a weighted colimit, $A$ is usually written as a {\em tensor product} $W\otimes D$, so that then the colimit property is described by the existence of order-isomorphisms
$$\ca(W\otimes D,B)\cong \Pos(W,\ca(D,B)),$$
naturally in $B\in{\rm ob}\ca$. The category $\ca$ is {\em tensored} if $W\otimes D$ exists for all $D\in{\rm ob}\ca$ and $W\in{\rm ob}\Pos$.

(7) Of particular interest is also the discretely ordered category $\cd=\{ a,b: 0\to 1\}$ with exactly two objects and exactly two non-identical arrows, together with the weight $W:\cd^{\rm op}\to\Pos$ defined by 
$$W1={\sf 1}=\{*\},\, W0={\sf 2}=\{u< v\},\,Wa(*)=u, \,Wb(*)=v,$$
sometimes referred to as the {\em Walking Two}. A diagram $D:\cd\to\ca$ is simply a pair $f,g: C\to B$ of morphisms in $\ca$, and a $W$-weighted colimit of that diagram is called a {\em co-inserter} for $f,g$, {\em i.e.,} it is an order-epic arrow $e:B\to A$, universal with respect to the property $e\cdot f\leq e\cdot g$, so that any $h:B\to E$ with $h\cdot f\leq h\cdot g$ factors as $h=t\cdot e$.

(8) {\em Weighted limits} in $\ca$ are, by definition, weighted colimits in $\ca^{\op}$.
\end{remarks}

\begin{definition}\label{def order solid}

(1) For an ordered functor $P:\ca\to\cx$ and a $W$-weighted cocone $\xi:PD\to X$ in $\cx$, we call the triple $(\alpha,A,q)$, consisting of a $W$-weighted cocone $\alpha:D\to A$ in $\ca$ and an $\cx$-morphism $q:X\to PA$, a {\em $P$-extension} of $\xi$ if
$q\cdot\xi=P\alpha$; that is, if
$q\cdot\xi_i^u=P\alpha_i^u$ for all $i\in\ob\cd, u\in Wi.$

(2) $(\alpha, A, q)$ is an {\em order-universal $P$-extension} of the $W$-weighted cocone $\xi:PD\to X$ if
\begin{itemize} 
\item[1.] $(\alpha, A, q)$ is a $P$-extension of $\xi$; 
\item[2.] $(\alpha, A, q)$ is {\em universal} with respect to property 1, that is: for every $P$-extension $(\beta,B,f)$ of $\xi$ there is a (unique) $\ca$-morphism $t:A\to B$ with $t\cdot\alpha=\beta$ and $Pt\cdot q=f$; 
\item[3.] $(\alpha,A,q)$ is {\em order-$P$-epi(morphi)c}, that is: for all $r,s:A\to B$ in $\ca$ one has the implication\\
$Pr\cdot e\leq Ps\cdot e,\; r\cdot \alpha\leq s\cdot \alpha\,\Longrightarrow\, r\leq s.$
\end{itemize}

(3) An ordered functor $P:\ca\to\cx$ is {\em order-solid} if every weighted cocone $\xi:PD\to X$ with any ordered diagram $D:\cd\to\ca$ of any weight $W:\cd^{\op}\to\Pos$ has an order-universal $P$-extension.
\end{definition}

\begin{remarks}\label{remarks order solid} 
(1) It is important to observe the difference of condition 3 in Definitions \ref{def strongly solid} and \ref{def order solid}: if the $\cx$-morphism $q:X\to PA$ with $A\in\ob\ca$ is order-P-epic, so is the $P$-extension $(\alpha,A,q)$, but not necessarily conversely. Reconciliation of this difference is the main aim of Theorem \ref{strong vs order} below, but the Open Problem \ref{oproblem 2} remains.

(2) A $P$-extension $(\alpha: D\to A,\,q:X\to PA)$ of a $1$-weighted cocone $\xi:PD\to X$ is an order-universal $P$-extension if it is a universal $P_o$-extension (previously called $P$-{\em semi-final}, see \cite{T79}). Consequently, {\em order-solid functors are solid in the ordinary sense}.

(3) 
For a $P$-extension $(\alpha, A, e:X\to PA)$
 of a weighted cocone $\xi:PD\to X$ over an empty diagram to be order-universal means more than having just a $P_o$-universal arrow at $X$ (in the sense of \cite{MacLane71}): in addition, the morphism $e:X\to PA$ (with the specified object $A\in{\rm ob}\ca$) needs to be order-$P$-epic and therefore serve as an adjunction unit in the enriched sense.
Consequently, {\em order-solid functors are order-right adjoint}.
 
(4) $(\alpha, A, q)$ is an {\em order-universal $P$-extension} of the $W$-weighted cocone $\xi:PD\to X$
if, and only if, the following diagram is a pullback, formally to be formed in the very large\footnote{See footnote 5.} category $\Posl$, even though its top row always lies in $\Pos$:

$$\begin{array}{rcl}&\xymatrix@C=4em{t\; \ar@{|->}[rr]&&\; Pt\; \ar@{|->}[rr]&&\;  Pt\cdot q}&\\
\xymatrix{t\ar@{|->}[d]\\t\cdot \alpha}& \xymatrix{\ca(A,B)\ar[d]\ar[r]^{P_{A,B}}&\cx(PA,PB)\ar[r]^{(-)\cdot q}&\cx(X,PB)\ar[d]\\ 
\Pos^{\cd^{\op}}(W,\ca(D-,B))\ar[rr]^{P^W_{D,B}}&&\Pos^{\cd^{\op}}(W,\cx(PD-,PB))}&\xymatrix{f\ar@{|->}[d]\\f\cdot \xi}\\
&\xymatrix@C=4em{\beta\; \; \ar@{|->}[rrrr]&&&&\; \; P\beta}&
\end{array}$$

(5) By (4) one has
$$\Pos^{\cd^{\op}}(W,\ca(D-,B))\times_{\Pos^{\cd^{\op}}(W,\cx(DP-,PB))} \cx(X,PB)\cong\ca(A,B)$$
in $\Posl$, naturally so with respect to $B$. Hence, considering the left-hand side as a functor $\ca\to\Posl$
in $B$, we see that the existence of an order-universal $P$-extension of $\xi$ is equivalent to the representability of that functor. A precursor of this statement for ordinary categories is contained in \cite{H72, H76}, and its generalization to the general enriched context in \cite{Anghel, Anghel1}.
\end{remarks}

It is easy to see that, for order-faithful functors, order-solidity is equivalent to strong order-solidity. For that let us first note:

\begin{lemma}\label{lemma epic}
For an order-faithful functor $P:\ca\to\cx$, if $(\alpha, A, q)$ is an order-universal $P$-extension of the weighted cocone $\xi:PD\to X$, then the morphism $q:X\to PA$ is $P$-epimorphic.
\end{lemma}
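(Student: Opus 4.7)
The plan is to unpack the definitions of ``$P$-epic,'' ``order-$P$-epic,'' and ``order-faithful,'' and then chain them together. So suppose $r,s:A\to B$ in $\ca$ satisfy $Pr\cdot q=Ps\cdot q$; the goal is to conclude $r=s$, and by anti-symmetry of the order on $\ca(A,B)$ it suffices to show both $r\leq s$ and $s\leq r$. Since the situation is symmetric in $r$ and $s$, I need only produce $r\leq s$.

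First I would use the defining relation $q\cdot\xi=P\alpha$ of a $P$-extension to transport the assumed equality into the diagram: postcomposing with $\xi$ (componentwise, for each $i\in\ob\cd$ and $u\in Wi$) gives $Pr\cdot P\alpha_i^u=Ps\cdot P\alpha_i^u$, hence $P(r\cdot\alpha_i^u)=P(s\cdot\alpha_i^u)$. In particular $P(r\cdot\alpha_i^u)\leq P(s\cdot\alpha_i^u)$ for every $i$ and $u$, and now order-faithfulness of $P$ (Definition \ref{def order ff}(2)) yields $r\cdot\alpha_i^u\leq s\cdot\alpha_i^u$ for all $i$ and $u$, which is exactly the statement $r\cdot\alpha\leq s\cdot\alpha$.

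At this point I have both inequalities that appear as hypotheses of condition~3 of Definition \ref{def order solid}(2): namely $Pr\cdot q\leq Ps\cdot q$ (from the assumed equality) and $r\cdot\alpha\leq s\cdot\alpha$ (just derived). Order-$P$-epicness of the order-universal $P$-extension $(\alpha,A,q)$ therefore delivers $r\leq s$. Reversing the roles of $r$ and $s$ gives $s\leq r$, and hence $r=s$, so $q$ is $P$-epic as claimed. There is no real obstacle here: the only subtlety is to remember that ``order-$P$-epic'' for a triple requires checking \emph{both} parts of its hypothesis, which is precisely where order-faithfulness supplies the second, $\alpha$-related inequality for free from an equality at the level of $\cx$.
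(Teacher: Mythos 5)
Your proof is correct and follows essentially the same route as the paper's: transport the hypothesis along $q\cdot\xi=P\alpha$, use order-faithfulness to obtain $r\cdot\alpha\leq s\cdot\alpha$, and then invoke order-$P$-epicness of the triple $(\alpha,A,q)$. The only (cosmetic) difference is that the paper starts directly from the inequality $Pr\cdot q\leq Ps\cdot q$ and concludes $r\leq s$, thereby establishing the stronger fact that $q$ is \emph{order}-$P$-epic (which is what Theorem \ref{strong vs order} actually uses); your one-directional argument is exactly that proof, applied twice to an equality.
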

\begin{proof}
Assuming $Pr\cdot q\leq Ps\cdot q$ with $r,s:A\to B$, one has $P(\alpha\cdot r)=Pr\cdot q\cdot\xi\leq Ps\cdot q\cdot\xi=P(\alpha\cdot s)$ and then $r\cdot \alpha\leq s\cdot\alpha$ when $P$ is order-faithful. Since $(\alpha,A,q)$ is order-$P$-epic, $r\leq s$ follows.
\end{proof}

\begin{theorem}\label{strong vs order}
An ordered functor $P:\ca\to\cx$ is strongly  order-solid if, and only if, it is order-solid and order-faithful. In this case, if $\cx$ has all weighted limits of a given shape $\cd$, so does $\ca$.
\end{theorem}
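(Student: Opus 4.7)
The plan is to handle the equivalence first and then the lifting of weighted limits.

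\emph{Forward direction $(\Rightarrow)$.} The order-faithfulness of a strongly order-solid $P$ is already recorded in Proposition~\ref{prop order ff}. To verify order-solidity, one unpacks a $W$-weighted cocone $\xi\colon PD\to X$ of shape $\cd$ as the mere family of morphisms $(\xi_i^u\colon PDi\to X)_{i\in\ob\cd,\,u\in Wi}$, applies strong order-solidity to produce a strongly order-universal $P$-extension $(\alpha,A,q)$, and then observes that the resulting family $\alpha_i^u\colon Di\to A$ is automatically a $W$-weighted cocone: the two requirements (monotonicity in $u$, and naturality with respect to $d\colon i\to j$ of $\cd$) reduce via $P\alpha_i^u=q\cdot\xi_i^u$ to the corresponding properties of $\xi$, using order-faithfulness and ordinary faithfulness of $P$. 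The universality in the weighted sense is inherited from the unweighted one, since any $P$-extension of $\xi$ is in particular a $P$-extension of the underlying family; and order-$P$-epicness of $q$ alone trivially implies the weaker form of order-$P$-epicness of the triple $(\alpha,A,q)$ demanded in Definition~\ref{def order solid}.

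\emph{Backward direction $(\Leftarrow)$.} Assume $P$ is order-solid and order-faithful, and let $(\xi_i\colon PD_i\to X)_{i\in I}$ be a family in $\cx$. View it as a weighted cocone with weight ${\sf 1}$ over the discretely ordered category on $I$ and invoke order-solidity to obtain an order-universal $P$-extension $(\alpha,A,q)$. What remains is to promote this to a strongly order-universal $P$-extension by showing that $q$ alone is order-$P$-epic. For $r,s\colon A\to B$ with $Pr\cdot q\leq Ps\cdot q$, composing with each $\xi_i$ and using $P\alpha_i=q\cdot\xi_i$ gives $P(r\cdot\alpha_i)\leq P(s\cdot\alpha_i)$, whence order-faithfulness yields $r\cdot\alpha\leq s\cdot\alpha$. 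Combined with the original inequality, the weak order-$P$-epicness of $(\alpha,A,q)$ now delivers $r\leq s$.

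\emph{Lifting of weighted limits.} Fix $D\colon\cd\to\ca$ and $W\colon\cd^{\op}\to\Pos$, and let $L=\lim^W(PD)$ exist in $\cx$ with universal cone $\pi$. The construction of $\lim^W D$ in $\ca$ will exploit that $P$ is ordinarily solid (hence lifts ordinary $\cd$-limits by Remark~\ref{first rem}(2)) and order-right adjoint with left adjoint $F$ (hence preserves whatever weighted limits already exist in $\ca$). Concretely, the plan is to apply Theorem~\ref{theorem E-char}: starting from $FL$ in $\ca$ together with the transposes $\tilde\pi_i^u\colon FL\to Di$ of the $\pi_i^u$, one forms the wide pushout in the class $\ce$ of the morphisms that force $\tilde\pi$ to be natural in $\cd$ and monotone in $u\in Wi$. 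The resulting quotient $\tilde L$ carries a canonical $W$-weighted cone to $D$, and its enriched universal property is to be verified by reducing order-inequalities between $\ca$-morphisms into $\tilde L$ to order-inequalities in $\cx(-,L)$ via order-faithfulness of $P$, and then invoking the universal property of $L$ in $\cx$.

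The main obstacle will lie in this last step: producing $\tilde L$ so that the conical and the $W$-weighted data are captured simultaneously, and verifying that the enriched universal property (involving order-inequalities between morphisms, not merely existence and uniqueness) survives the construction. Strong order-solidity supplies the right-adjoint machinery and the order-faithfulness needed to reduce the enriched universality to the ordinary one in $\cx$; the remaining delicate work is the careful identification of the $\ce$-relations to impose on $FL$.
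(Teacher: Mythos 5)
Your treatment of the equivalence coincides with the paper's: the forward direction is the same reduction of a $W$-weighted cocone to its underlying discrete family, with order-faithfulness and ordinary faithfulness restoring monotonicity in $u$ and naturality in $\cd$ for the lifted cocone, and your backward direction is exactly the paper's Lemma \ref{lemma epic}, reproved inline. That part is fine.

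The lifting of weighted limits is where your proposal breaks down. The adjoint transposes $\tilde\pi_i^u\colon FL\to Di$ of the limit projections form a $W$-weighted cone \emph{automatically}: the order-isomorphisms $\ca(FL,Di)\cong\cx(L,PDi)$ are natural in $i$ and monotone, so naturality in $\cd$ and monotonicity in $u$ are inherited from $\pi$ for free. Consequently the wide pushout you propose, of ``the morphisms that force $\tilde\pi$ to be natural and monotone,'' is vacuous and returns $\tilde L=FL$ --- which is not the weighted limit (already for the discretely ordered $P\colon\mathsf{Grp}\to\Set$, $FL$ is the free group on the underlying set of the limit, not the limit). The relations that must actually be imposed do not come from $\tilde\pi$ at all; they come from the \emph{large comparison family} of all pairs $(C,x\colon PC\to L)$ with $C\in\ob\ca$ such that every $\pi_i^u\cdot x$ underlies an $\ca$-morphism $C\to Di$. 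The argument the paper intends (the one it calls ``analogous to Theorem \ref{thm characterization}'') takes the strongly order-universal $P$-extension $q\colon L\to P\tilde L$ of exactly this family, factors each $\pi_i^u$ as $P\lambda_i^u\cdot q$ by its universality, shows $q$ is an isomorphism by feeding $\tilde L$ itself back into the family, and checks joint order-monicity of $\lambda$ using order-faithfulness. If you insist on the $(\mathrm{A}),(\mathrm{P}),(\mathrm{W})$ picture of Theorem \ref{theorem E-char}, the morphisms to be pushed out of $FL$ are the counits $\varepsilon_C$ transported along $Fx$ for all members $(C,x)$ of that large family --- nothing definable from $\tilde\pi$ alone. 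Without this family your $\tilde L$ has no reason to satisfy the universal property: a weighted cone from $B$ to $D$ only induces a morphism $PB\to L$ in $\cx$, and it is precisely the semifinal lift of the large family that converts this into an $\ca$-morphism $B\to\tilde L$.
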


\begin{proof}
For the first statement, if $P:\ca\to\cx$ is order-solid and order-faithful, the morphism $q$ of any order-universal $P$-extension of a weighted cocone $\xi$ is order-P-epic, by Lemma \ref{lemma epic}. This holds particularly when $\xi$ is a $\sf 1$-weighted cocone over a discrete diagram, as needed to satisfy Definition \ref{def strongly solid}. Conversely, let $P$ be strongly order-solid. By Proposition \ref{prop order ff}, P is order-faithful. Furthermore, in order to construct an order-universal $P$-extension of any $W$-weighted cocone $\xi:PD\to X$ with $D:\cd\to\ca$, one considers $\xi=(\xi_i^u:Di\to X)_{i\in\ob\cd,u\in Wi}$ as a discretely indexed family of morphisms, for which we have a strongly universal $P$-extension $(\alpha=(\alpha_i^u:Di\to A)_{i,u}, A, q)$, by hypothesis. Since $P$ is order-faithful, from $u\leq v$ in $Wi$ and, hence, $P\alpha_i^u=\xi_i^u\cdot q\leq \xi_i^v\cdot q=P\alpha_i^u$, one concludes $\alpha_i^u\leq \alpha_i^v$; and, analogously, for $d:i\to j$ in $\cd$, the faithfulness of $P$ implies $\alpha^v_j\ot Dd=\alpha_i^{Wd(v)}$. This makes $(\alpha,A,q)$ an order-universal $P$-extension of the $W$-weighted cocone $\xi$.

For the second statement (on the existence of weighted limits), one proceeds analogously to the proof of Theorem \ref{thm characterization} which deals with the special case of inserters. 
\end{proof}

In the ordinary case, that is, when the categories are ordered discretely, the notions of solid and strongly solid are equivalent, since, as proved in \cite{T79}, every solid functor is faithful. But we have not been able to decide whether order-faithfulness is an essential condition in  Theorem \ref{strong vs order}:

\begin{oproblem}\label{oproblem 2}
Is every order-solid functor order-faithful?
\end{oproblem}

\section{Order-solid functors and weighted colimits}\label{sec colim}

For the study of the behaviour of order-solid functors with respect to weighted colimits, we first consider  order-universal $P$-extensions of individual cocones, without the universal quantification over all such data. An easy, but nevertheless fundamental, observation in this regard is that, in generalization of a well-known property of the ordinary notions,  the order-universal $P$-extension of a weighted colimit of $PD$ gives a weighted colimit of $D$, as stated in the next proposition. In Remark \ref{rem cocomma} we recall some important types of weighted colimits and some of their properties.

\begin{proposition}\label{colimit lift}
 For an ordered functor $P:\ca\to\cx$, an ordered diagram $D:\cd\to\ca$ and a weight $W:\cd^{\rm op}\to\Ord$, let $\xi:PD\to  X$ be a $W$-weighted colimit of $PD$ in $\cx$. Then:
 
 {\em (1)} If $(\alpha:D\to A,\,A,\,q:X\to PA)$ is an order-universal $P$-extension of $\xi$, then $\alpha$ is a  $W$-weighted colimit of $D$ in $\ca$.
 
 {\em (2)} If $\alpha:D\to A$ is a $W$-weighted colimit of $D$ and $q:X\to PA$ the comparison morphism induced by $\xi$, then $(\alpha,A,q)$ is an order-universal $P$-extension of $\xi$.
\end{proposition}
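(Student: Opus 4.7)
The overall strategy is to unpack Definition \ref{def order solid} of an order-universal $P$-extension and the formulation of weighted colimits from Remarks \ref{def weighted colim}, then to use the colimit property of $\xi$ in $\cx$ to convert between $W$-weighted cocones over $PD$ and $\cx$-morphisms out of $X$. Throughout I use that the ordered functor $P$ sends any $W$-weighted cocone $\beta:D\to B$ in $\ca$ to a $W$-weighted cocone $P\beta:PD\to PB$ in $\cx$.

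For part (1), the family $\alpha$ is already a $W$-weighted cocone by the very definition of a $P$-extension. To establish its universality, I take an arbitrary $W$-weighted cocone $\beta:D\to B$, consider $P\beta:PD\to PB$, and invoke the colimit property of $\xi$ to obtain the unique $f:X\to PB$ with $f\cdot\xi=P\beta$. This turns $(\beta,B,f)$ into a $P$-extension of $\xi$, and the order-universality of $(\alpha,A,q)$ yields a unique $t:A\to B$ with $t\cdot\alpha=\beta$ (and automatically $Pt\cdot q=f$). For the order-epicness of $\alpha$, I start from $r\cdot\alpha\leq s\cdot\alpha$; applying $P$ and using $P\alpha=q\cdot\xi$ gives $Pr\cdot q\cdot\xi\leq Ps\cdot q\cdot\xi$, so the order-epicness of the colimit cocone $\xi$ forces $Pr\cdot q\leq Ps\cdot q$. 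The order-$P$-epic property of the triple $(\alpha,A,q)$ then delivers $r\leq s$.

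For part (2), the triple $(\alpha,A,q)$ is a $P$-extension by construction of the comparison morphism. Given a competing $P$-extension $(\beta,B,f)$, the colimit property of $\alpha$ produces the unique factoring $t:A\to B$ with $t\cdot\alpha=\beta$, and the identity $Pt\cdot q\cdot\xi=P(t\cdot\alpha)=P\beta=f\cdot\xi$ together with the order-epicness of $\xi$ (hence its ordinary epicness, by antisymmetry of the hom-orders) forces $Pt\cdot q=f$. The order-$P$-epic property of the triple is then immediate from the order-epicness of $\alpha$ alone, since the hypothesis $r\cdot\alpha\leq s\cdot\alpha$ already implies $r\leq s$.

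I do not expect a serious obstacle. The only point requiring care is to distinguish two notions of epicness at play: the order-epicness of a cocone (as in Remarks \ref{def weighted colim}), and the order-$P$-epicness of a triple $(\alpha,A,q)$ (as in Definition \ref{def order solid}). Part (1) exploits the stronger triple-level condition to transfer epicness across $P$, while in part (2) the order-$P$-epicness comes for free because $\alpha$ is already order-epic as a colimit cocone.
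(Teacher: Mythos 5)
Your proof is correct and follows essentially the same route as the paper: part (1) is argued verbatim the same way (factor $P\beta$ through the colimit $\xi$, invoke order-universality of the triple for the factorization, and combine order-epicness of $\xi$ with order-$P$-epicness of $(\alpha,A,q)$ to get order-epicness of $\alpha$), and your part (2) is the natural completion of what the paper dismisses with ``proceeds similarly.'' Your closing remark correctly isolates the one point of care — the distinction between order-epicness of a cocone and order-$P$-epicness of a triple — which is exactly the asymmetry the paper flags in Remarks \ref{remarks order solid}(1).
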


\begin{proof}
(1) The colimit property of $\xi$ makes $P\beta$, for any cocone $\beta: D\to B$, factor as $P\beta= f\cdot\xi$, with $f:X\to PB$ in $\cx$. Order-universality of $(\alpha, A, q)$ gives $t:A\to B$ in $\ca$ with $Pt\cdot q=f$ and $t \cdot \alpha=\beta$. For any morphisms $r,s: A\to B$ with $r\cdot\alpha\leq s\cdot\alpha$, order preservation by $P$ gives $(Pr\cdot q)\cdot\xi\leq (Ps\cdot q)\cdot\xi$, whence $Pr\cdot q\leq Ps\cdot q$ follows since $\xi$ is order-epic. With $(\alpha, A, q)$ being order-$P$-epic, we conclude $r\leq s$. 

(2) The proof proceeds similarly to the proof of (1).
\end{proof}

\begin{corollary}\label{cor colim}
Let the ordered functor $P:\ca\to\cx$ admit order-universal $P$-extensions for all weighted cocones of shape $\cd$. Then, if $\cx$ has weighted colimits for diagrams of shape $\cd$, so does $\ca$; likewise for conical colimits instead of weighted colimits.
\end{corollary}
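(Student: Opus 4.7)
The plan is to reduce this directly to Proposition \ref{colimit lift}(1), since that proposition already tells us how an order-universal $P$-extension converts a colimit ``downstairs'' (in $\cx$) into a colimit ``upstairs'' (in $\ca$). The hypotheses are tailored exactly to produce the two ingredients required: a colimit in $\cx$ to start from, and an order-universal $P$-extension to lift it.

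To carry this out for weighted colimits, given an ordered diagram $D:\cd\to\ca$ and any weight $W:\cd^{\op}\to\Pos$, I would first apply $P$ to obtain the ordered diagram $PD:\cd\to\cx$ of the same shape. By the assumption that $\cx$ has $W$-weighted colimits of diagrams of shape $\cd$, there exists a $W$-weighted colimit $\xi: PD\to X$ of $PD$ in $\cx$. Next, using the hypothesis that $P$ admits order-universal $P$-extensions for all weighted cocones of shape $\cd$ (which by Definition \ref{def order solid} includes cocones of any weight, in particular $\xi$), we choose an order-universal $P$-extension $(\alpha:D\to A,\,A,\,q:X\to PA)$ of $\xi$. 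Proposition \ref{colimit lift}(1) then delivers, for free, that $\alpha:D\to A$ is a $W$-weighted colimit of $D$ in $\ca$, which is precisely what we need.

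For the ``likewise'' clause concerning conical colimits, I would specialize the argument to the constant weight $W={\sf 1}:\cd^{\op}\to\Pos$, as described in Remarks \ref{def weighted colim}(4): a $\sf 1$-weighted cocone is just an ordinary cocone that additionally is order-epic, and a $\sf 1$-weighted colimit is a conical colimit. Assuming $\cx$ has conical colimits of shape $\cd$, pick such a conical colimit $\xi:PD\to X$, then the order-universal $P$-extension of the $\sf 1$-weighted cocone $\xi$ provided by the hypothesis yields, again by Proposition \ref{colimit lift}(1), a $\sf 1$-weighted colimit of $D$, i.e.\ a conical colimit.

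There is essentially no obstacle: the whole content of the corollary is packaged inside Proposition \ref{colimit lift}(1), and the only verifications needed are the formal ones that the hypothesis ``$\cx$ has $W$-weighted colimits of shape $\cd$'' quantifies over all weights $W$ and that the hypothesis on $P$ applies to the specific cocone $\xi$ we produce. Both are immediate from the statements, so the proof is a one-line invocation rather than a genuine argument.
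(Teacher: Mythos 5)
Your proposal is correct and is exactly the argument the paper intends: the corollary is an immediate consequence of Proposition \ref{colimit lift}(1), obtained by forming the $W$-weighted colimit $\xi:PD\to X$ in $\cx$ and lifting it along an order-universal $P$-extension, with the conical case handled by the constant weight $\sf 1$ as in Remarks \ref{def weighted colim}(4). The paper leaves this proof implicit, and your write-up supplies precisely the missing routine verification.
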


\begin{corollary}\label{new cor}
If $P:\ca\to\cx$ is order-solid and $\cx$ has weighted colimits of shape $\cd$, so does $\ca$.
\end{corollary}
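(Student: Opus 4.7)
The plan is to observe that this corollary is an immediate consequence of Corollary \ref{cor colim} combined with the definition of order-solidity, Definition \ref{def order solid}(3). That definition stipulates that every weighted cocone $\xi:PD\to X$ in $\cx$, for \emph{any} ordered diagram $D:\cd\to\ca$ and \emph{any} weight $W:\cd^{\op}\to\Pos$, admits an order-universal $P$-extension. In particular, restricting attention to the shape $\cd$ at hand, the hypothesis of Corollary \ref{cor colim} is automatically met, and the transfer of weighted colimits of shape $\cd$ from $\cx$ to $\ca$ is immediate.

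For a self-contained argument, I would proceed as follows. Given a diagram $D:\cd\to\ca$ together with a weight $W:\cd^{\op}\to\Pos$, first form the composite $PD:\cd\to\cx$ and take its $W$-weighted colimit $\xi:PD\to X$ in $\cx$, which exists by hypothesis on $\cx$. Then apply order-solidity of $P$ to select an order-universal $P$-extension $(\alpha:D\to A,\,A,\,q:X\to PA)$ of the weighted cocone $\xi$. Finally, Proposition \ref{colimit lift}(1) guarantees that $\alpha:D\to A$ is itself a $W$-weighted colimit of $D$ in $\ca$, as required.

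No real obstacle is expected here: the technical heart of the argument was dealt with once and for all in Proposition \ref{colimit lift}(1), where the lifting of weighted colimit cocones through order-universal $P$-extensions was established. The present corollary merely bundles that fact with the uniform availability of order-universal $P$-extensions for all weighted cocones afforded by the very definition of an order-solid functor.
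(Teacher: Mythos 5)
Your proposal is correct and matches the paper's (implicit) argument exactly: the corollary is stated without separate proof precisely because Definition \ref{def order solid}(3) supplies order-universal $P$-extensions for all weighted cocones, so one takes the $W$-weighted colimit $\xi:PD\to X$ in $\cx$ and applies Proposition \ref{colimit lift}(1) (equivalently, Corollary \ref{cor colim}) to its order-universal $P$-extension. Nothing further is needed.
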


\begin{corollary}\label{tensor coinserter}
 If the ordered functor $P:\ca\to\cx$ admits order-universal $P$-extensions for all weighted cocones of shape $\sf 1$ and $\cx$ is tensored, so is $\ca$.
\end{corollary}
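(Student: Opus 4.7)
The plan is to reduce this to Corollary \ref{cor colim} (or equivalently, to a direct application of Proposition \ref{colimit lift}(1)) applied to the terminal shape $\cd=\sf 1$. Indeed, by Remark \ref{def weighted colim}(6), a diagram of shape $\sf 1$ in any ordered category is just an object, a weight $W:{\sf 1}^{\op}\to\Pos$ is just an ordered set, and a $W$-weighted colimit of a diagram $D$ is exactly a tensor product $W\otimes D$. Thus the assertion ``$\cx$ is tensored" is simply the statement that $\cx$ admits all $W$-weighted colimits of all $\sf 1$-shaped diagrams, and the conclusion ``$\ca$ is tensored" is the corresponding statement for $\ca$.

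Concretely, given an object $D\in\ob\ca$ and a weight $W\in\ob\Pos$, I would proceed as follows. First, form the tensor product $\xi:PD\to W\otimes PD$ in $\cx$, available by hypothesis; this is a $W$-weighted colimit of $PD$, viewed as a $\sf 1$-shaped diagram in $\cx$. Second, by assumption $P$ admits order-universal $P$-extensions for all weighted cocones of shape $\sf 1$, so choose an order-universal $P$-extension $(\alpha:D\to A,\,A,\,q:W\otimes PD\to PA)$ of $\xi$. Third, apply Proposition \ref{colimit lift}(1) to conclude that $\alpha:D\to A$ is a $W$-weighted colimit of $D$ in $\ca$. By Remark \ref{def weighted colim}(6), this provides the desired tensor product $W\otimes D\cong A$ in $\ca$, together with the natural order-isomorphism
\[\ca(W\otimes D,B)\cong\Pos(W,\ca(D,B)).\]
Since $D$ and $W$ were arbitrary, $\ca$ is tensored.

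No real obstacle arises: the argument is simply an instantiation of Corollary \ref{cor colim} at $\cd=\sf 1$, once one unpacks the definition of ``tensored" as the existence of all weighted colimits of shape $\sf 1$. The only point that deserves a brief remark is that the comparison morphism $q$ supplied by order-universality is precisely what is needed to witness the isomorphism $A\cong W\otimes D$ in the sense of Remark \ref{def weighted colim}(6).
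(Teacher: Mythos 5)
Your proof is correct and follows exactly the paper's route: the corollary is the instantiation of Corollary \ref{cor colim} (equivalently, of Proposition \ref{colimit lift}(1)) at the terminal shape $\cd={\sf 1}$, using Remark \ref{def weighted colim}(6) to identify $W$-weighted colimits of ${\sf 1}$-shaped diagrams with tensor products. Nothing further is needed.
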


\begin{remarks}\label{rem cocomma}
(1) Recall that, for morphisms $f:A\to B,\,g:A\to C$ in an ordered category $\ca$, a {\em cocomma object} for $(f,g)$ is given by an object $D$ and morphisms $p:B\to D,\,q:B\to D$ in $\ca$ with $p\cdot f\leq q\cdot g$
$$\xymatrix{A\ar[r]^g\ar[d]_f^{\; \; \; \; \rotatebox[origin=c]{45}{$\leq$}}&C\ar[d]^q\\
B\ar[r]_p&D}$$
 and $(p,q)$ is universal with that property (so that any pair $(k,l)$ with common codomain $E$ and $k\cdot f\leq l\cdot g$ must factor through $(p,q)$ by a morphism $t:D\to E$; moreover, the pair $(p,q)$ is required to be jointly order-epic.

(2) Similarly to co-inserters, also cocomma objects are easily recognized as weighted colimits: instead of the discretely ordered diagram shape with a parallel pair, consider a span and define the weight of its domain and codomains as for the Walking Two (see Remark \ref{def weighted colim}(7)).

(3) It is easy to see that one may construct the cocomma object of $(f,g)$ as in (1) by forming the conical coproduct $B+C$ with injections $i, j$ and then the co-inserter $c:B+C\to D$ of $(f\cdot i, g\cdot j)$. 

(4) Conversely to (3), having cocomma objects at one's disposal, one may construct the co-inserter $c:B\to C$ of a pair $(f,g: A\to B)$ by forming their cocomma object $(p,q: B\to D)$ and then the conical coequalizer $e: D\to C$ of the pair $(p,q)$.

(5) As a consequence of (3) and (4), in the presence of finite conical colimits, the existence of cocomma objects is equivalent to the existence of co-inserters.

(6) It is well known (see Lemma 3.13 of \cite{ASV15} in the dual situation) that the tensor product $W\otimes A$ may be constructed with conical copowers and co-inserters, as follows: presenting the order of $W$ as a subset $W_1$ of $W_0\times W_0$, with $W_0$ the underlying set of $W$, which comes with projections $d_1, d_2:W_1\to W_0$, one forms the conical copowers $W_1\otimes A$, $W_0\otimes A$ (which are, in fact tensor products with discretely ordered sets) and then the co-inserter of the induced morphisms $d_1\otimes A,\, d_2\otimes A: W_1\otimes A\to W_0\otimes A$.

(7) A standard result of enriched category theory (see Theorem 3.73 of \cite{Kelly82}) guarantees the existence of all weighted colimits of small shape in $\ca$ when $\ca$ has tensor products, conical coproducts and conical coequalizers. Taking the preceding remarks into account, one obtains: {\em The ordered category $\ca$ has all weighted colimits of small shape if, and only if, it has small-indexed conical coproducts, conical coequalizers and at least one --and then all-- of the following types of weighted colimits: tensor products, co-inserters, or cocomma objects.}
\end{remarks}
With Remark \ref{rem cocomma}(2), Proposition \ref{colimit lift} gives:
\begin{corollary}
If the ordered functor $P:\ca\to\cx$ admits order-universal $P$-extensions of cocones of finite shape and $\cx$ has cocomma objects (respectively, co-inserters), so does $\ca$.
\end{corollary}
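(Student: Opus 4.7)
The plan is to apply Proposition~\ref{colimit lift}(1) after recognizing that both co-inserters and cocomma objects are instances of weighted colimits whose shape $\cd$ is finite. For co-inserters, by Remark~\ref{def weighted colim}(7), the relevant shape is the discretely ordered category $\{a,b:0\to 1\}$ equipped with the Walking Two weight $W:\cd^{\op}\to\Pos$. For cocomma objects, by Remark~\ref{rem cocomma}(2), the shape is the (discretely ordered) span category with the analogous weight assigning $\sf 2$ at the apex and $\sf 1$ at the feet. In both cases $\cd$ is finite.

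Given such a diagram $D:\cd\to\ca$ (a parallel pair or a span in $\ca$, respectively), the first step is to pass to $PD:\cd\to\cx$ and form its $W$-weighted colimit $\xi:PD\to X$, which exists by hypothesis on $\cx$. This gives a $W$-weighted cocone $\xi$ on $PD$ in $\cx$, to which we then apply the second hypothesis on $P$: there is an order-universal $P$-extension $(\alpha:D\to A,\,A,\,q:X\to PA)$ of $\xi$, since $\xi$ has finite shape.

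Finally, Proposition~\ref{colimit lift}(1) applies verbatim: because $\xi$ is a $W$-weighted colimit of $PD$ and $(\alpha,A,q)$ is an order-universal $P$-extension, the cocone $\alpha$ is a $W$-weighted colimit of $D$ in $\ca$. Specializing back, this is exactly a co-inserter (in the parallel-pair case) or a cocomma object (in the span case) for the given morphisms of $\ca$. Hence $\ca$ has co-inserters (respectively, cocomma objects), as claimed.

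The argument is essentially a direct invocation of the machinery already developed, and the only point requiring care is the bookkeeping of weights: one must verify that the finite weighted diagrams used to describe co-inserters and cocomma objects fall under the quantifier ``cocones of finite shape'' in the hypothesis, which is immediate from Remarks~\ref{def weighted colim}(7) and~\ref{rem cocomma}(2). No substantial obstacle is anticipated.
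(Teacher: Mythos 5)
Your proposal is correct and follows exactly the paper's intended argument: the paper derives this corollary by combining Remark \ref{rem cocomma}(2) (and Remark \ref{def weighted colim}(7)) with Proposition \ref{colimit lift}(1), just as you do. The bookkeeping of the weights (the Walking Two on the parallel pair, and $\sf 2$ at the apex of the span with $\sf 1$ at the feet) is also as in the paper.
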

We return to the examples presented in Section \ref{sec examples}.
\begin{examples}\label{examples again}
(1) Conical colimits in $\Pos$ are given by ordinary colimits. The tensor product $W\otimes A$ may be given as $W\times A$, ordered like the direct product. The cocomma object of $(f:A\to B,\,g:A\to C)$ has as its underlying set the union $B\cup C$, which may be assumed to be disjoint; one then maintains the orders of its subsets $B$ and $C$ and adds to that the condition that $y\leq z$ holds for $y\in B$ and $z\in C$ if $y\leq f(x)$ and $g(x)\leq z$ for some $x\in A$.

(2) In $\SLat$, the (conical) copower of $A$ indexed by a set $W_0$, denoted $A^{(W_0)}$, is the sub-semilattice of the power $A^{W_0}$ whose elements have all but finitely many coordinates equal to the  top element $\top$; each injection   $p_w$ maps every $a$ to $(a_u)\in A^{W_0}$ with $a_w=a$ and $a_u=\top$ for $u\not=w$.  The tensor product $A\otimes W$ is the quotient $A^{(W_0)}/\hspace*{-1.5mm}\sim\, $ where $W_0$ is the underlying set of $W$ and  $\sim$ is the least congruence containing the pairs $(p_u(a)\wedge p_v(a),\, p_u(a))$ for all $a\in A$ and $u\leq v$ in $W$. Given  $f:A\to B$ and $g:A\to C$ in $\SLat$, let  $B\times C$ be the  product in $\Pos$ (then also the conical product and conical coproduct in $\SLat$); the cocomma object of $(f,g)$  is the quotient $B\times C/\sim$, where $\sim$ is the least congruence relation with $(f(a),r)\sim (f(a), s)$ for all $a\in A$ and $r,s\geq g(a)$.

(3) The characterization of weighted colimits in $\Frm$ is more involved. Concerning the conical coproduct, if we first  take it in $\SLat$ and then form the  order-universal $U$-extension of the corresponding $U$-sink using the construction of  Example \ref{exa-frm}, we obtain precisely the description of the coproduct given in \cite{PP}. We can proceed in an analogous way for coequalizers (see also \cite{PP}), tensor products, co-inserters and cocomma objects.

(4) In $\OVec$, given  morphisms $f,g:V\to W$, we describe the co-inserter of $(f,g)$.  Let $C$ be the cone given by the sum of $PW$ with the cone   $S=\{g(v)-f(v)\,|\, v\in PV\}$, so $C=PW+S=\{u+w\,|\, u\in PW,\, w\in S\}$. The intersection $U=C\cap (-C)$ is a subspace of $W$. Let $W/U$ be the quotient space of $W$ whose order has positive cone $P(W/U)=\{w+U\,|\, w\in C\}$. Then the  co-inserter of the pair $(f,g)$ is precisely the projection $W\to W/U$. With this charaterization of the co-inserters, it is easy to obtain similar descriptions for tensor products and cocomma objects, using the fact that conical coproducts in $\OVec$ are just the usual direct sums of spaces with the positive cone given by the sum of the positive cones of the components of the sum; see Remark \ref{rem cocomma} (3) and (6).
\end{examples}

Guided by Anghel's Theorem 2.2.8 in \cite{Anghel1}, we now give a step-by-step analysis of what may be needed to construct an order-universal $P$-extension $(\alpha: D\to A,\,A,\, q: X\to PA)$ of a given $W$-weighted cocone $\xi:PD\to X$ with $D:\cd\to\ca$, assuming that we have some particular weighted colimits and a certain order-universal $P$-extension over an $\ob\cd$-indexed discrete diagram at our disposal.

{\bf{\em Step 1:}} For every $i\in{\rm ob}\cd$, we assume that the tensor products $(\lambda_i^u:Di\to Wi\otimes Di)_{u\in Wi}$ and $(\kappa_i^u:PDi\to Wi\otimes PDi)_{u\in Wi}$ 
with comparison morphisms $c_i: Wi\otimes PDi\to P(Wi\otimes Di)$ exist in $\ca$ and $\cx$, respectively. (Of course, by Corollary \ref{tensor coinserter}, the former tensor product may be obtained from the latter by an order-universal $P$-extension.) 
For every $i$, we let $\xi_i: Wi\otimes PDi\to X$ be the induced $\cx$-morphism satisfying $\xi_i\cdot\kappa_i^u=\xi_i^u$ for all $u\in Wi$. 

{\bf{\em Step 2:}} We assume that in $\cx$ there exists the conical {\em generalized pushout} diagram
$$\xymatrix{Wi\otimes PDi
\ar[r]^{c_i}\ar[d]_{\xi_i}\ar@{}[rd]|{\rm{}} & P(Wi\otimes Di)
\ar[d]^{\overline{\xi_i}}\\X\ar[r]_{p} & \overline{X}}$$
(which, of course, one may construct by first forming the conical pushout $(\xi'_i, c_i')$ of each pair $(\xi_i,c_i)$ and then the conical wide pushout  (= co-intersection) of $(c_i')_{i\in{\rm ob}\cd}$).

{\bf{\em Step 3:}} We assume that the (discretely) ${\rm ob}\cd$-indexed and $\sf 1$-weighted cocone $(\overline{\xi_i}:P(Wi\otimes Di)\to
\overline{X})_{i\in{\rm ob}\cd}$ has an order-universal $P$-extension $(\alpha_i:Wi\otimes Di\to A)_{i\in{\rm ob}\cd},\, A,\, q_o:\overline{X}\to PA)$.

We set $q:=q_o\cdot p:X\to PA$ and  $\alpha_i^u:=\alpha_i\cdot \lambda_i^u:Di\to A$ for all $i\in{\rm ob}\cd, u\in Wi$, and prove:

\begin{proposition}\label{construction}
Under the assumptions of Steps 1-3, and when (the ordinary functor) $P$ is faithful, one obtains a $W$-weighted cocone $\alpha=(\alpha_i^u)_{i,u}$ which, together with $q$, gives an order-universal $P$-extension of the given $W$-weighted cocone $\xi$.
\end{proposition}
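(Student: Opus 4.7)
The plan is to verify, in turn, four things: that the family $\alpha=(\alpha_i^u)_{i,u}$ is a $W$-weighted cocone over $D$; that $(\alpha,A,q)$ is a $P$-extension of $\xi$; that it is universal; and that it is order-$P$-epic. Uniqueness of the factorization morphism $t$ in the universality statement will then be a free consequence of order-$P$-epicness together with anti-symmetry.

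For the first two items, the key computation is to chase $P\alpha_i^u=P\alpha_i\cdot P\lambda_i^u$ through the three constructions. Using $P\alpha_i=q_o\cdot\overline{\xi_i}$ from Step 3, the comparison identity $P\lambda_i^u=c_i\cdot\kappa_i^u$ from Step 1, and the pushout square $\overline{\xi_i}\cdot c_i=p\cdot\xi_i$ from Step 2, the expression telescopes to $q_o\cdot p\cdot\xi_i\cdot\kappa_i^u=q\cdot\xi_i^u$. This immediately gives $P\alpha=q\cdot\xi$, settling the $P$-extension property. The monotonicity $\alpha_i^u\leq\alpha_i^v$ in $u\leq v$ is inherited from the corresponding property of the tensor-product cocone $\lambda_i^{(-)}$ under post-composition with $\alpha_i$. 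The $\cd$-naturality condition $\alpha_j^v\cdot Dd=\alpha_i^{Wd(v)}$ is where faithfulness of $P$ is essential: applying $P$ converts both sides into $q\cdot\xi_j^v\cdot PDd$ and $q\cdot\xi_i^{Wd(v)}$ respectively, which coincide since $\xi$ is itself a $W$-weighted cocone, and then faithfulness allows us to reflect the equality back to $\ca$.

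For universality, given any $P$-extension $(\beta,B,f)$ of $\xi$, the strategy is to collapse it layer by layer into a $P$-extension of the Step-3 data. First, each monotone family $(\beta_i^u)_u$ factors through the tensor product to give $\beta_i\colon Wi\otimes Di\to B$. Second, composing with the jointly epic $\kappa_i^u$ shows $P\beta_i\cdot c_i=f\cdot\xi_i$, so the universal property of the generalized pushout in Step 2 yields a unique $f'\colon\overline{X}\to PB$ with $f'\cdot p=f$ and $f'\cdot\overline{\xi_i}=P\beta_i$. Third, applying the order-universality of $(\alpha_i,A,q_o)$ to the family $(\beta_i,B,f')$ produces the desired $t\colon A\to B$ with $t\cdot\alpha_i=\beta_i$ and $Pt\cdot q_o=f'$; by post-composition with $\lambda_i^u$ and with $p$ one recovers $t\cdot\alpha=\beta$ and $Pt\cdot q=f$.

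For order-$P$-epicness, suppose $r,s\colon A\to B$ satisfy $Pr\cdot q\leq Ps\cdot q$ and $r\cdot\alpha\leq s\cdot\alpha$. Order-epicness of each tensor-product cocone forces $r\cdot\alpha_i\leq s\cdot\alpha_i$ from $r\cdot\alpha_i\cdot\lambda_i^u\leq s\cdot\alpha_i\cdot\lambda_i^u$. Applying $P$ and using $\overline{\xi_i}\cdot c_i=p\cdot\xi_i$ yields $Pr\cdot q_o\cdot\overline{\xi_i}\leq Ps\cdot q_o\cdot\overline{\xi_i}$, while the hypothesis gives $Pr\cdot q_o\cdot p\leq Ps\cdot q_o\cdot p$. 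Order-epicness of the conical generalized pushout then upgrades these to $Pr\cdot q_o\leq Ps\cdot q_o$, and the order-$P$-epicness of the Step-3 extension finally delivers $r\leq s$. The main obstacle is not any individual verification but the careful bookkeeping through the three layered universal constructions (tensors, pushout, and the Step-3 order-universal extension) and pinpointing exactly where faithfulness of $P$ must be invoked, namely in establishing the $\cd$-naturality of the cocone $\alpha$; once these points are disentangled, each check reduces to a routine diagram chase.
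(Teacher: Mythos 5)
Your proof is correct and follows essentially the same route as the paper's: establish the cocone conditions (invoking faithfulness of $P$ exactly for the $\cd$-naturality of $\alpha$), collapse a competing $P$-extension through the tensors and the generalized pushout to reach the Step-3 universal property, and derive order-$P$-epicness by peeling back through the order-epic cocones of the three layered colimits. The only cosmetic remark is that in the order-$P$-epicness step the inequality $Pr\cdot q_o\cdot\overline{\xi_i}\leq Ps\cdot q_o\cdot\overline{\xi_i}$ is most cleanly obtained by applying $P$ to $r\cdot\alpha_i\leq s\cdot\alpha_i$ and using $P\alpha_i=q_o\cdot\overline{\xi_i}$ from Step 3, rather than via the pushout square; this does not affect the validity of your argument.
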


\begin{proof}
Clearly, from $\lambda_i^u\leq\lambda_i^{u'}$ one obtains $\alpha_i^u\leq\alpha_i^{u'}$ for all $u\leq u'\in Wi,\,i\in{\rm ob}\cd$. Also, from $\xi_j^v\cdot PDd=\xi_i^{Wd(v)}$ one obtains $P(\alpha_j^v\cdot Dd)=P(\alpha_i^{Wd(v)})$ and then, when (the ordinary functor) $P$ is faithful, $\alpha_j^v\cdot Dd=\alpha_i^{Wd(v)}$
for all $d:i\to j$ in $\cd$ and $v\in Wj$. Hence, $\alpha$ is a $W$-weighted cocone, obviously satisfying $P\alpha=q\cdot \xi$.

Given any $W$-weighted cocone $\beta: D\to B$ in $\ca$ and an $\cx$-morphism $f:X\to PB$ with $P\beta= f\cdot\xi$, we consider the ${\rm ob}\cd$-indexed cocone  $(\beta_i)_i$ with $\beta_i\cdot\lambda_i^u=\beta_i^u$ for all $u\in Wi,\,i\in {\rm ob}\cd$. Then, from $P\beta_i\cdot c_i\cdot\kappa_i^u=f\cdot\xi_i\cdot\kappa_i^u$ for all $u$ one derives $P\beta_i\cdot c_i=f\cdot \xi_i$ for all $i$. The generalized pushout now gives an $\cx$-morphism $g:\overline{X}\to PB$ with $g\cdot p=f$ and $g\cdot \xi_i=P\beta_i$ for all $i$. Order-universality of the discrete cocone 
$(\alpha_i)_i$ together with $q_o$ finally produces an $\ca$-morphism $t:A\to B$ with $Pt\cdot q_o=g$ and $t\cdot\alpha_i=\beta_i$ for all $i\in{\rm ob}\cd$, from which one easily deduces $Pt\cdot q=f$ and $ t\cdot\alpha=\beta$.

To show that $(\alpha,A,q)$ is order-$P$-epic, we consider $\ca$-morphisms $r,s:A\to B$ with $Pr\cdot q\leq Ps\cdot q, \, r\cdot\alpha\leq s\cdot\alpha$. The latter inequality gives $r\cdot\alpha_i\leq s\cdot\alpha_i$ for every $i\in{\rm ob}\cd$ since the cocone $(\lambda_i^u)_u$ of the tensor product $Wi\otimes Di$ is order-epic, while the first inequality and the cocone $(\kappa_i^u)_u$ of the tensor product $Wi\otimes PDi$ being order-epic give $Pr\cdot q_o\cdot\overline{\xi_i}\leq Ps\cdot q_o\cdot\overline{\xi_i}$. Since also the conical generalized
pushout is order-epic, with $Pr\cdot q_o\cdot p\leq Ps\cdot q_o\cdot p$ one obtains
$Pr\cdot q_o\leq Ps\cdot q_o$. In conjunction with $r\cdot\alpha_i\leq s\cdot\alpha_i$ for every $i\in{\rm ob}\cd$ one can finally conclude $r\leq s$ since the order-universal $P$-extension $((\alpha_i)_{i\in{\rm ob}\cd},A,q_o)$ is order-$P$-epic.
\end{proof}

\begin{remarks}\label{second remarks}
(1) For any $A\in{\rm ob}\ca,\,W\in{\rm ob}\Pos$, such that the respective tensor products in $\ca$ and $\cx$ exist, we call the canonical morphism $c:W\otimes PA\to P(W\otimes A)$ a {\em tensor comparison morphism}.  In order to perform Step 2 it suffices that $\cx$ {\em has conical generalized pushouts of tensor comparison morphisms}; more precisely: 
the conical pushout of a tensor comparison morphism along any morphism exists in $\cx$, and the conical wide pushout of any family of such pushouts exists as well.

(2) If $\ca$ has tensor products preserved by $P$, then the needed tensor products and pushouts in $\cx$ as described in (1) trivially exist and are conical since then, by definition of preservation, all tensor comparison morphisms are isomorphisms, so that by putting $\overline{X}=X,\,\overline{\xi_i}=\xi_i\cdot c_i^{-1},\, p=1_X$ one obtains the needed generalized pushout diagram of Step 2.

(3) While, when tensor products exist in $\cx$, Corollary \ref{tensor coinserter} guarantees their existence also in $\ca$ if $P$ admits order-universal $P$-extensions of weighted cocones over diagrams of shape $\sf 1$, these will generally not be preserved by $P$, even when $P$ is strongly order-solid. For instance, let $A$ be the 2-chain ${\sf 2}=\{0<1\}$, and let $W$ be the discrete 2-element poset, thus $W\otimes A$ is just a conical copower. In $\SLat$, it is the diamond poset, but in $\Pos$ it is just the disjoint union of two copies of ${\sf 2}$. Hence, the strongly order-solid functor $V$ of Example \ref{exa-slat}  does not preserve tensor products.

(4) If one tightens the condition of Proposition \ref{construction} that $P$ be faithful to $P$ being order-faithful, then the construction leads us more generally from an (op)lax cocone $\xi$ to an (op)lax cocone $\alpha$, as the beginning of the proof of the Proposition shows. (Recall that an ({\em op}){\em lax $W$-weighted cocone} $\alpha: D\to A$ is given by an (op)lax natural transformation $\alpha: W\to\ca(D-,A)$, so that the identities $\alpha_j^v\cdot Dd=\alpha_i^{Wd(v)}$ of Remarks \ref{def weighted colim}(1) get traded for the inequalities $\alpha_j^v\cdot Dd\leq\alpha_i^{Wd(v)}$ (``$\geq$" in the op-lax case).) Consequently, with Propositions \ref{colimit lift}, \ref{construction}, the construction leads from ({\em op}){\em lax colimits} (= universal (op)lax cocones) in $\cx$ to (op)lax colimits in $\ca$ when $P$ is order-faithful.
\end{remarks}

With the Remarks \ref{second remarks} we obtain from Proposition \ref{construction} the following Corollary:

\begin{corollary}
For an ordered functor $P:\ca\to\cx$ and any ordered category $\cd$, all weighted cocones $\xi:PD\to X$ with $D:\cd\to \ca$ have order-universal $P$-extensions if

\begin{itemize}
\item[{\em (a)}] all $\sf1$-weighted and discrete $\ob\cd$-indexed cocones $(\xi_i:PD_i\to X)_i$ have order-universal $P$-extensions;
\item[{\em (b)}] $\ca$ has tensor products and $P$ preserves them;
\item[{\em (c)}] as an ordinary functor, $P$ is faithful.
\end{itemize}
\end{corollary}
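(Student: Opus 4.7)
The plan is to verify that hypotheses (a), (b), (c) together imply the three-step setup of Proposition \ref{construction}, and then apply that proposition directly. Fix a weight $W:\cd^{\op}\to\Pos$, a diagram $D:\cd\to\ca$, and a $W$-weighted cocone $\xi:PD\to X$ in $\cx$.

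First, for Step 1, hypothesis (b) provides the tensor products $(\lambda_i^u:Di\to Wi\otimes Di)_{u\in Wi}$ in $\ca$ for every $i\in\ob\cd$, and since $P$ preserves them, the image cocones $(P\lambda_i^u:PDi\to P(Wi\otimes Di))_{u\in Wi}$ serve as tensor products $Wi\otimes PDi$ in $\cx$; equivalently, each tensor comparison morphism $c_i:Wi\otimes PDi\to P(Wi\otimes Di)$ is an isomorphism. This gives the induced morphisms $\xi_i:Wi\otimes PDi\to X$ satisfying $\xi_i\cdot\kappa_i^u=\xi_i^u$, as required.

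Next, Step 2 is handled by Remark \ref{second remarks}(2): since each $c_i$ is an isomorphism, one may take $\overline{X}:=X$, $p:=1_X$, and $\overline{\xi_i}:=\xi_i\cdot c_i^{-1}$, which trivially gives a conical generalized pushout diagram. For Step 3, the discretely indexed $\sf 1$-weighted cocone $(\overline{\xi_i}:P(Wi\otimes Di)\to\overline{X})_{i\in\ob\cd}$ has an order-universal $P$-extension by hypothesis (a) applied to the family $D_i:=Wi\otimes Di$. Thus all three steps are in place.

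Finally, hypothesis (c) supplies the faithfulness needed to invoke Proposition \ref{construction}, yielding an order-universal $P$-extension $(\alpha,A,q)$ of $\xi$. The only point worth highlighting is that condition (c) is genuinely used, because without faithfulness the equalities $\alpha_j^v\cdot Dd=\alpha_i^{Wd(v)}$ making $\alpha$ a strict weighted cocone (rather than merely an oplax one, as noted in Remark \ref{second remarks}(4)) cannot be deduced from their $P$-images. No further obstacle arises; the construction is assembled entirely from the three cited ingredients.
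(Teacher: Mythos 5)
Your proof is correct and follows exactly the route the paper intends: the corollary is stated as a direct consequence of Proposition \ref{construction} together with Remark \ref{second remarks}(2), and you verify Steps 1--3 of that construction from hypotheses (b), (b)+Remark \ref{second remarks}(2), and (a) respectively, with (c) supplying the faithfulness needed to invoke the proposition. Nothing is missing.
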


We can now combine some of the previous statements and formulate a characterization of order-solid functors:
\begin{theorem}\label{thm tensored}
For the ordered functor $P:\ca\to\cx$, assume that $\cx$ has all tensor products $W\otimes PD$ (with $W$ in $\Pos$ and $D$ in $\ca$), as well as conical generalized pushouts of arbitrary families of tensor comparison morphisms. Then $P$ is order-solid if, and only if, $\ca$ is tensored and $P$ admits order-universal $P$-extensions for all $\sf 1$-weighted cocones of discrete shape. The assumption on $\cx$ is particularly satisfied when $\ca$ is tensored and $P$ preserves tensor products.
\end{theorem}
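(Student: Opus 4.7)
The plan is to handle the two directions of the biconditional in sequence, and then dispatch the final sentence by a direct observation.

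For the ``only if'' direction, assume $P$ is order-solid. Then, trivially, $P$ admits order-universal $P$-extensions for every weighted cocone, and in particular for the ${\sf 1}$-weighted cocones of discrete shape. To see that $\ca$ is tensored, I would invoke Corollary \ref{tensor coinserter}: its proof only requires the tensor products of the form $W\otimes PD$ to exist in $\cx$, which is precisely our first hypothesis, and order-universal $P$-extensions for shape ${\sf 1}$ come for free from order-solidity.

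For the ``if'' direction, I would take an arbitrary weighted cocone $\xi:PD\to X$ with $D:\cd\to\ca$ and weight $W:\cd^{\op}\to\Pos$, and verify the hypotheses of Proposition \ref{construction}. Step~1 is supplied by $\ca$ being tensored and $\cx$ having the tensor products $W_i\otimes PD_i$; Step~2 is exactly the conical-generalized-pushout assumption; Step~3 is the second hypothesis on $P$.

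The main obstacle is to establish the remaining condition of Proposition \ref{construction}, namely that $P$, viewed as an ordinary functor, is faithful. My plan here is to observe that dropping the order-$P$-epicity requirement (3) from Definition \ref{def order solid} leaves precisely the universal-$P_o$-extension condition underlying Trnkov\'a's weak inductive generation. Thus the hypothesis on $P$, even restricted to ${\sf 1}$-weighted discrete cocones, already forces $P$ to be solid in the ordinary sense; by the Cantor-style diagonal argument recalled in Remark \ref{first rem}(1) (cf.\ Lemma 3.2 of \cite{T79}), $P$ is therefore faithful. Proposition \ref{construction} now produces an order-universal $P$-extension of $\xi$, and as $\xi$ was arbitrary, $P$ is order-solid.

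For the concluding sentence, if $\ca$ is tensored and $P$ preserves tensor products, then for every $W\in\Pos$ and $D\in\ob\ca$ the tensor comparison morphism $c:W\otimes PD\to P(W\otimes D)$ is an isomorphism in $\cx$. Both the required tensor products $W\otimes PD$ and the conical generalized pushouts of (arbitrary families of) such isomorphisms then exist trivially, so the hypothesis on $\cx$ is automatic.
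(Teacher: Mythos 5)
Your proposal is correct and follows essentially the same route as the paper: the ``only if'' direction via Proposition \ref{colimit lift}(1) (equivalently, the argument behind Corollary \ref{tensor coinserter}, correctly noting that only the tensor products $W\otimes PD$ in $\cx$ are needed), the ``if'' direction by feeding the hypotheses into Proposition \ref{construction} after deducing ordinary solidity --- hence faithfulness --- of $P_o$ from the $\sf 1$-weighted discrete extensions, and the final claim via Remark \ref{second remarks}(2). No gaps.
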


\begin{proof} When $P$ is order-solid, by Proposition \ref{colimit lift}(1), the existence of tensor products of the form $W\otimes PD$ in $\cx$ is sufficient to make $\ca$ tensored; also, trivially, the specified weighted cocones have order-universal $P$-extensions. Conversely, the existence of the specified order-universal $P$-extensions suffices to make the ordinary functor $P$ solid and, hence, faithful. With our assumptions on $\cx$, Proposition \ref{construction} now guarantees that $P$ is order-solid.

The additional claim follows from Remark \ref{second remarks}(2).
\end{proof}

\end{document}